\documentclass[11pt]{amsart}

\usepackage{epsfig}
\usepackage{amsfonts}
\usepackage{amssymb}
\usepackage{amsmath}
\usepackage{amsthm}
\usepackage{amscd}
\usepackage{amstext}
\usepackage{latexsym}

\usepackage{setspace}



\newcommand{\CC}{\mathbf{C}}

\newcommand{\JJ}{\mathcal{J}}
\newcommand{\OO}{\mathcal{O}}

\newcommand{\II}{\mathcal{I}}

\newcommand{\iddb}{\sqrt{-1} \partial \overline{\partial}}

\newcommand{\db}{\overline{\partial}}
\newcommand{\al}{\alpha}

\newcommand{\om}{\Omega}

\newcommand{\ep}{\epsilon}

\newcommand{\el}{\ell}

\newcommand{\varep}{\varepsilon}
\newcommand{\qa}{\quad}
\newcommand{\vp}{\varphi}
\newcommand{\vep}{\varep}
\newcommand{\noi}{\noindent}

\newcommand{\evp}{e^{-\varphi}}

\newcommand{\zb}{\bar{z}}

\newcommand{\gp}{g_1, \cdots, g_p}

\providecommand{\abs}[1]{\left\vert #1\right \vert}

\providecommand{\norm}[1]{\lVert#1\rVert}

\providecommand{\hs}[2]{ {\mathcal H}^{#1}_{#2}}

\theoremstyle{plain}

\newtheorem{theorem}{Theorem}[section]

\newtheorem{lemma}[theorem]{Lemma}
\newtheorem{corollary}[theorem]{Corollary}
\newtheorem{proposition}[theorem]{Proposition}

\newtheorem{conjecture}[theorem]{Conjecture}

\newtheorem*{division problem}{Division Problem}

\theoremstyle{definition}
\newtheorem{definition}[theorem]{Definition}

\theoremstyle{remark}

\newtheorem{remark}[theorem]{Remark}

\DeclareMathOperator{\divisor}{div}

\DeclareMathOperator{\Ker}{Ker}
\DeclareMathOperator{\Image}{Im}

\DeclareMathOperator{\Dom}{Dom}

\DeclareMathOperator{\Real}{Re}

\DeclareMathOperator{\Skod}{Skod}
\DeclareMathOperator{\Tor}{Tor}

\long\def\symbolfootnote[#1]#2{\begingroup\def\thefootnote{\fnsymbol{footnote}}
\footnote[#1]{#2}\endgroup}

\renewcommand{\thefootnote}{}

\author{Dano Kim  }

\title{The Exactness of a general Skoda complex}

\begin{document}

\maketitle

\begin{abstract}

\noi  We show that a Skoda complex with a general plurisubharmonic weight function is exact if its `degree' is sufficiently large. This answers a question of Lazarsfeld and implies that not every integrally closed ideal is equal to a multiplier ideal even if we allow general plurisubharmonic weights for the multiplier ideal, extending the result of Lazarsfeld and Lee~\cite{LL}.

\end{abstract}

\footnote{\noindent This work was supported by the  National Research Foundation of Korea grants NRF-2012R1A1A1042764 and No.2011-0030795, funded by the Korea government and also by Research Startup Fund for new faculty of Seoul National University. } 

\section{Introduction}

 In complex algebraic geometry, a singular weight  function of the form $\frac{1}{\abs{f}^2} =: e^{-\vp}$ plays an important role, where $f$ is a holomorphic function. It is natural to consider more generally, a singular weight $e^{-\vp}$ where $\vp$ is a plurisubharmonic function. Given $e^{-\vp}$, there are two fundamental ways to define an ideal sheaf of local holomorphic function germs, say $u$  : collecting those with $\abs{u}^2 e^{-\vp}$ locally bounded above, on one hand and collecting those with local integral $\int_\Omega \abs{u}^2 e^{-\vp}$ finite, on the other hand. The former gives an \emph{integrally closed} ideal and the latter a \emph{multiplier} ideal.

 A multiplier ideal is always an integrally closed ideal, but the converse had been unknown  in general dimension until \cite{LL} showed the existence of an integrally closed ideal that is not a multiplier ideal of a psh function with analytic singularities (e.g. of the form $\log \abs{f}^2$ for $f$ holomorphic). In this paper, we extend this result to the full generality of multiplier ideals of arbitrary psh functions. 
  `ementary consequence of a local vanishing theorem \cite[(9.4.4), (9.6.36)]{L}. However, the general case of the exactness of a Skoda complex cannot be equally shown from the vanishing theorem because of the difficult openness conjecture~(\ref{openness}) which is not known beyond dimension $2$ (see \cite{FJ}). Instead of vanishing, we use  the $L^2$ methods of \cite{Sk72} to a Skoda complex setting and prove

\begin{theorem}\label{main}
Let $X$ be a complex manifold and $L$ and $M$ line bundles on $X$. Let $e^{-\psi}$ be a singular hermitian metric with psh weight for the line bundle $M$.
 Let $g_1, \cdots, g_p \in H^0(X,L)$. Then there exists an integer $q \ge p$ such that the $q$-th Skoda complex associated to $\gp$ is exact. More precisely, one can take $q = \lfloor \frac{1}{4} p^2 + \frac{1}{2} p + \frac{5}{4} \rfloor $.

\end{theorem}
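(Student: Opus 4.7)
The plan is to adapt the analytic $L^2$ methods of Skoda \cite{Sk72} --- originally used to prove his division theorem --- to the Koszul complex setting, in the spirit of \cite{H67}. Since exactness of the Skoda complex is a local question, I would fix a germ and work on a small pseudoconvex open $\Omega \subset \CC^n$ on which $L$ and $M$ are trivialized and the $g_i$ become holomorphic functions. Writing $|g|^2 = \sum_{i=1}^p |g_i|^2$, I would view the $q$-th Skoda complex as a twisted Koszul complex whose $k$-th term involves the multiplier ideal of a weight of the form $\psi + c_k \log|g|^2$, with differential given by contraction against $g = (g_1,\ldots,g_p)$.

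The main step would be to prove, by downward induction on the Koszul degree $k$, that every local cocycle at degree $k$ admits a preimage at degree $k+1$. This amounts to solving a $\db$-equation with psh weight of the form $\psi + (q-k-1)\log|g|^2$ in an $L^2$-controlled way, which is exactly the setting of Skoda's $L^2$ existence theorem provided the coefficient of $\log|g|^2$ is large enough with respect to the number of generators $p$ and the ambient dimension. Iterating over the Koszul degrees $k = 0, 1, \ldots, p-1$ and tracking the constraint on the weight coefficient at each level should produce the explicit quadratic bound $q \geq \lfloor \tfrac{1}{4}p^2 + \tfrac{1}{2}p + \tfrac{5}{4} \rfloor$.

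The hardest part will be that Skoda's $L^2$ estimate only delivers a local preimage lying in $\JJ(\psi + (q-k-1+\varepsilon)\log|g|^2)$ for arbitrarily small $\varepsilon > 0$, whereas the Koszul differential demands precise membership in $\JJ(\psi + (q-k-1)\log|g|^2)$; bridging this $\varepsilon$-gap is exactly the strong openness problem of \cite{DK}, which is unknown in dimension $\geq 3$. Rather than trying to prove openness, I would appeal to a generalized Skoda-type division, designed so that its conclusion is phrased directly in terms of the stationary limit ideal $\JJ_+(\alpha,\beta) = \bigcap_{t > 0} \JJ(\alpha + t\beta)$ introduced in the introduction, with $\alpha = \psi + (q-k-1)\log|g|^2$ and $\beta = \log|g|^2$. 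Using that the $\JJ_+$-formulation is intrinsic to the Skoda $L^2$ output, the iterative step closes without an $\varepsilon$-loss and the complex becomes exact. This limit-ideal formulation is the analytic replacement for the local vanishing theorem used in the algebraic case of \cite{LL}, and it is what makes the general transcendental proof genuinely different from the algebraic one.
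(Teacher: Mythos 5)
Your general starting point (adapting the $L^2$ methods of \cite{Sk72} to the Koszul setting, in the spirit of \cite{H67}, and localizing to a Stein $\Omega$) matches the paper, but the core of your argument has a genuine gap: you misdiagnose where the difficulty lies and then ``fix'' it with a move that does not prove the stated theorem. You claim that the Skoda-type $L^2$ step inevitably loses an $\varepsilon$ in the weight, so that the preimage only lands in $\JJ(\psi+(q-k-1+\varepsilon)\log\abs{g}^2)$, and you propose to close the gap by rephrasing the conclusion in terms of the stationary ideal $\JJ_+$. But the Skoda complex \eqref{skocom} is defined with the honest multiplier ideals $\JJ((q-m)\varphi+\psi)$, and the equality $\JJ_+=\JJ$ is exactly Conjecture~\ref{openness}, which is open in dimension $\ge 3$; asserting that ``the $\JJ_+$-formulation is intrinsic to the Skoda $L^2$ output'' and that therefore ``the iterative step closes without an $\varepsilon$-loss'' is a non sequitur. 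Either you prove a statement about a modified $\JJ_+$-complex (which is not Theorem~\ref{main}), or you are implicitly assuming openness --- and in the paper that assumption is only used for the \emph{conditional} optimal result (Proposition~\ref{openskoda}, via Nadel vanishing on a log resolution), not for the unconditional theorem you are asked to prove.

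The paper's actual mechanism avoids any $\varepsilon$ from the start: exactness at Koszul degree $m$ is reformulated as surjectivity $P(\Ker T)=\Ker P_{m-1}$ between the Hilbert spaces $\hs{m}{0}$ and $\hs{m-1}{0}$ taken with the \emph{exact} weights $e^{-(q-m)\varphi-\psi}$, and this is verified through the functional-analytic criterion of Lemma~\ref{FA}: one must establish the a priori estimate $\norm{P^*u+T^*\beta}^2+\norm{S\beta}^2\ge C\norm{u}^2$. This is done by combining the algebraic identity $\norm{P^*u}=\norm{u}$ on $\Ker P$ (Proposition~\ref{adjoint}), the Bochner--Kodaira inequality (Lemma~\ref{BK}), which supplies $(q-m)$ copies of the curvature norm of $\beta$ with respect to $\iddb\log\abs{g}^2$, and Cauchy--Schwarz estimates showing the cross term $2\Real(P^*u,T^*\beta)$ costs at most $\frac{1}{B}\norm{u}^2$ plus $(m(p-m+1)+1)$ copies of that same curvature norm. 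The quadratic bound $q=\max_{0\le m\le p} m(p-m+1)+1=\lfloor\frac14 p^2+\frac12 p+\frac54\rfloor$ arises from this per-degree comparison of curvature gain versus cross-term cost, not from an iteration over degrees accumulating constraints; your proposal never supplies a mechanism producing this bound, only the assertion that tracking constraints ``should'' yield it. Because the solution is produced directly in the correctly weighted Hilbert space, membership in the required multiplier ideal is automatic, and no openness-type input is needed --- which is precisely the point that makes the large-$q$ theorem unconditional.
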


 Theorem~\ref{main} can be considered as a generalized version of the Skoda type division theorem~\cite{Sk72} in that we {divide} not only at the right end of the Skoda complex but also at all the other intermediate terms of the Skoda complex.  Indeed, we separate the division statement as Proposition~\ref{effprop} which follows from the proof of (\ref{main}) (see Remark~\ref{eff}).

  The value of $q$ in (\ref{main}) comes from $\lfloor \frac{1}{4} p^2 + \frac{1}{2} p + \frac{5}{4}  \rfloor = \max_{0 \le m \le p} m(p-m+1)+1 $. We believe the optimal value of $q$ which can be used in the statement will be $q = p$ as is also indicated by (\ref{openskoda}). But the present value of $q$ is sufficient to establish the following generalization of the main result of \cite{LL}.

\begin{corollary}\label{mainc}

 There exist a complex algebraic variety $X$ and an integrally closed ideal sheaf $\mathfrak{b}$ on it such that $\mathfrak{b}$ cannot be written as a multiplier ideal sheaf $\JJ(\varphi)$ even if we allow $\varphi$ to be a general plurisubharmonic function.

\end{corollary}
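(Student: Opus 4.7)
The plan is to follow the strategy of Lazarsfeld--Lee~\cite{LL}, replacing their use of the exactness of the Skoda complex for psh weights with analytic singularities (which rested on a local Kodaira-type vanishing theorem on a log resolution) by Theorem~\ref{main}, which delivers exactness for a completely general psh weight. The crucial observation is that the obstruction produced in~\cite{LL} is purely algebraic---a constraint on local syzygies of a multiplier ideal coming from the mere existence of the Skoda resolution---and is therefore insensitive to the analytic nature of the weight. Once Theorem~\ref{main} is in hand, that constraint automatically applies to every multiplier ideal, not only those arising from weights with analytic singularities.

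Concretely, I would take the same smooth ambient variety $X$ and the same candidate integrally closed ideal $\mathfrak{b}$ produced in~\cite{LL}. Assume for contradiction that $\mathfrak{b} = \JJ(\vp)$ on a Stein neighbourhood of the distinguished point $x \in X$, for some psh $\vp$ (possibly transcendental). Choose local generators $g_1, \ldots, g_p$ of $\mathfrak{b}$ near $x$, viewed as sections of the trivial line bundle $L = \OO_X$. Applying Theorem~\ref{main} to this data with $q = \lfloor \tfrac{1}{4} p^2 + \tfrac{1}{2} p + \tfrac{5}{4} \rfloor$ yields exactness of the $q$-th Skoda complex associated to $g_1, \ldots, g_p$ with weight $\vp$. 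Passing to stalks at $x$ and reading off the relation at the appropriate term, one recovers exactly the local syzygy statement on $\mathfrak{b}$ that~\cite{LL} exploited in the algebraic case.

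Because the counterexample ideal of~\cite{LL} was designed precisely to violate that syzygy statement, the assumption $\mathfrak{b} = \JJ(\vp)$ forces a contradiction, yielding the corollary. The main point requiring care---essentially the only work beyond invoking Theorem~\ref{main}---is that the value of $q$ furnished by Theorem~\ref{main} is larger than the conjecturally optimal $q = p$. One has to verify that the~\cite{LL} obstruction still manifests itself at the $q$-th rather than the $p$-th step of the Skoda complex. This reduces to checking that the multiplier-ideal twists on each term of the $q$-th complex are compatible with the syzygy computation in~\cite{LL}; since the latter argument uses only the existence of \emph{some} exact Skoda complex of the appropriate shape, the larger $q$ should cause no difficulty, although one may have to increase the number of generators, or equivalently the embedding dimension of the example, so that the obstruction lives in the correct homological degree.
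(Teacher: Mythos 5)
Your overall strategy is the paper's: run the Lazarsfeld--Lee argument with Theorem~\ref{main} supplying the exactness of the Skoda complex for an arbitrary psh weight, and the only real work is absorbing the non-optimal value of $q$. However, two points in your sketch need correction. First, the Skoda complex is not built on generators of $\mathfrak{b}$ with weight $\vp$; in the \cite{LL} mechanism the sections $g_1,\dots,g_p$ generate an \emph{auxiliary} ideal $\mathfrak{a}$ (ultimately the maximal ideal $\mathfrak{m}$, so $p=n$), $\varphi=\log\abs{g}^2$, and the transcendental weight $\psi$ with $\JJ(\psi)=\mathfrak{b}$ enters only through the twists $\JJ((q-m)\varphi+\psi)$. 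The exact Skoda complex is then wedged between the Koszul complexes $K_\bullet(h)\otimes\mathfrak{a}^{\bullet}\JJ(\psi)$ and $K_\bullet(h)\otimes\JJ(\psi)$, giving the vanishing of $H_r\bigl(K_\bullet(h)\otimes\mathfrak{a}^{p'-r}\JJ(\psi)\bigr)\to H_r\bigl(K_\bullet(h)\otimes\JJ(\psi)\bigr)$, hence of $\Tor_r(\mathfrak{m}^{n'-r}\JJ(\psi),\CC)\to\Tor_r(\JJ(\psi),\CC)$, and from there the syzygy bound; one does not ``read off a syzygy statement on $\mathfrak{b}$'' directly from stalks of the complex on generators of $\mathfrak{b}$.

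Second, and this is the genuine gap, your proposed remedy for the larger $q$ (increasing the number of generators, i.e.\ the embedding dimension of the example) is not the right move and would be self-defeating: the bound $n'$ produced by Theorem~\ref{main} grows roughly like $n^2/4$ with the dimension, so enlarging the dimension only raises the threshold you must beat. The effect of the non-optimal $q$ is solely to replace the conclusion ``no minimal $p$th syzygy of a multiplier ideal vanishes modulo $\mathfrak{m}^{n+1-p}$'' by the weaker one with some $n'\ge n$ depending only on $\dim X$. The contradiction is then obtained exactly as in the paper because \cite[Example 2.2]{LL} already provides, in fixed dimension, integrally closed ideals supported at a point with a \emph{first syzygy vanishing to arbitrarily high order} $a$; choosing $a\ge n'$ shows such an ideal cannot be any $\JJ(\psi)$, which proves the corollary. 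Without invoking that arbitrary-order feature of the example, your argument does not close.
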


\noi Note that this is generalization of the main result of \cite{LL} since a priori the class of all analytic multiplier ideal sheaves associated to a psh function might be strictly larger than the class of all algebraic multiplier ideal sheaves. (We also remark here that the note added at the end of \cite{LL} was made when we did not realize yet that the proof of the implication (\ref{openskoda}) was depending on (\ref{openness}). )

 This paper is organized as follows. In Section 1, we motivate and give the definition of a Skoda complex. In Section 2, we discuss the openness conjecture of \cite{DK} and show that it implies the exactness of a Skoda complex in full generality. Section 3, we prove our main theorem~(\ref{main}), the exactness of a Skoda complex not assuming the openness conjecture, of course. In Section 4, we follow \cite{LL} to derive the existence of an integrally closed ideal that is not a multiplier ideal~(\ref{mainc}).

\begin{remark}

 Apart from the use for local syzygy in \cite{LL}, a Skoda complex was originally used to prove the Skoda-type division theorem in an algebraic way (using cohomology vanishing) in \cite{EL}. On the other hand, the original analytic way of \cite{Sk72} to prove the Skoda-type division theorem (not via cohomology vanishing) does not involve the use of a Skoda complex. But it is interesting to note that \cite{H67} had used a Koszul complex (together with his $L^2$ methods for $\db$) for a prototype result toward Skoda division. Later it was replaced by the more refined $L^2$ methods of \cite{Sk72}.

\end{remark}

\noindent \textit{Acknowledgement.} The author would like to thank Professor Robert Lazarsfeld for informing him of the question of the exactness of a Skoda complex with general plurisubharmonic functions, at an AIM workshop in 2006. The author also thanks Professor Lawrence Ein for his suggestion and encouragement to use the methods of \cite{Sk72} in the present setting.  The author is also grateful to Kyungyong Lee for useful discussions and to Sebastien Boucksom for helpful answers to some questions on the openness conjecture.
\\

\section{Definition of a Skoda complex  }

 Let $X$ be a complex manifold and $L$ a line bundle on $X$. Let $g_1, \cdots, g_p \in H^0(X, L)$ be holomorphic sections. Let $M$ be another line bundle.  Let $\mathcal{A} (M)$ denote either the set of all holomorphic sections of $M$ or the set of all complex-valued measurable sections of $M$. Given $u \in \mathcal{A} (M)$, we can ask whether there exist $h_1, \cdots, h_p \in \mathcal{A}(M- L)$  such that $u = h_1 g_1 + \cdots + h_p g_p$. Such a division problem is concerned with the surjectivity of the multiplication map $P: \mathcal{A} (M- L)^{\oplus p} \to \mathcal{A} (M) $ given by $ (v_1, \cdots, v_p) \mapsto  v_1 g_1 + \cdots + v_p g_p $ from the direct sum of $p$ copies of $\mathcal{A}(M- L)$ on the left. In some approaches to the division problem, one needs to extend the single map $P$ to a Koszul-type complex to the left:

 $$ 0 \to  \mathcal{A} (M-pL)^{\oplus {p\choose p}} \to \cdots \to   \mathcal{A} (M-2L)^{\oplus {p\choose2}}  \to    \mathcal{A} (M- L)^{\oplus p} \to \mathcal{A} (M) \to 0    .$$

\noindent where we use the basis  $ \{ e_{i_1} \wedge \cdots \wedge e_{i_m}  \vert  1 \le i_1 < \cdots < i_m \le p  \} $  for $\mathcal{A} (M- mL)^{\oplus {p\choose m}} =: \mathcal{B}_m$ and the map $P: \mathcal{B}_m \to \mathcal{B}_{m-1}$ in the complex is the usual Koszul map

\begin{equation}\label{koszul}
 P ( e_{i_1} \wedge \cdots \wedge e_{i_m} ) = \sum_{k=1}^m (-1)^{k-1} g_{i_k} e_{i_1} \wedge \cdots \wedge \widehat{e_{i_k}} \wedge \cdots \wedge e_{i_m}   .
\end{equation}

\noindent $P$ is defined by \eqref{koszul} and the linearity in $\mathcal{A}$. In effect, we introduced $e_i$'s in order to define the map $P$. Now it is also natural to consider the weight $\varphi = \log \abs{g}^2$ (defining $\abs{g}^2 := \abs{g_1}^2 + \cdots + \abs{g_p}^2$ throughout this paper) and (for $1 \le m \le p$) the subset $\mathcal{A} (M - mL , e^{- (p -m) \vp } e^{-\psi}   ) \subset \mathcal{A} (M - mL) $ of sections that are square-integrable with respect to $ e^{- (p -m) \vp } e^{-\psi}  dV$ where $dV$ is a volume form and $e^{-\psi}$ is a (auxiliary) weight for the line bundle $(M- pL)$.

\begin{proposition}
\qa

 The restriction of the Koszul map $P$ to  $\mathcal{A} (M - mL , e^{- (p -m) \vp} e^{-\psi} )^{\oplus{ p \choose m} } $   has its image contained in  $\mathcal{A} (M - (m-1)L , e^{- (p - m + 1) \vp} e^{-\psi} )^{\oplus{p \choose m-1} }   $.

\end{proposition}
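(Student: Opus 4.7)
The proof will be a direct calculation using the pointwise bound coming from the very definition $\varphi = \log \abs{g}^2$. First, I would unpack what membership in $\mathcal{A}(M-mL, e^{-(p-m)\varphi}e^{-\psi})^{\oplus \binom{p}{m}}$ means: a section $v = \sum_{i_1 < \cdots < i_m} v_{i_1\cdots i_m}\, e_{i_1} \wedge \cdots \wedge e_{i_m}$ with each component $v_{i_1 \cdots i_m}$ a section of $M - mL$ satisfying $\int \abs{v_{i_1 \cdots i_m}}^2 e^{-(p-m)\varphi} e^{-\psi}\, dV < \infty$. After applying $P$, each component of $P(v)$ is, by \eqref{koszul}, a finite sum (with signs) of terms of the form $g_{i_k}\, v_{i_1 \cdots i_m}$, which is naturally a section of $M-(m-1)L$.

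The heart of the matter is the pointwise inequality
\[
\abs{g_{i_k}}^2 \le \abs{g_1}^2 + \cdots + \abs{g_p}^2 = \abs{g}^2 = e^{\varphi},
\]
which is immediate from the definition $\varphi = \log \abs{g}^2$. Multiplying both sides by $\abs{v_{i_1 \cdots i_m}}^2 e^{-(p-m+1)\varphi}$, one obtains the key estimate
\[
\abs{g_{i_k} v_{i_1 \cdots i_m}}^2\, e^{-(p-m+1)\varphi} \;\le\; \abs{v_{i_1 \cdots i_m}}^2\, e^{-(p-m)\varphi}
\]
holding pointwise on $\Omega$. Integrating against $e^{-\psi}\,dV$ shows that each individual product $g_{i_k} v_{i_1 \cdots i_m}$ lies in $\mathcal{A}(M - (m-1)L, e^{-(p-m+1)\varphi} e^{-\psi})$.

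It then remains to assemble these bounds: the $(i_1,\ldots,\widehat{i_k},\ldots,i_m)$-component of $P(v)$ is a finite signed sum of such products, so by the triangle inequality in $L^2$ with weight $e^{-(p-m+1)\varphi}e^{-\psi}\,dV$ the component is square-integrable against this weight. Summing over the $\binom{p}{m-1}$ index tuples gives the required containment $P(v) \in \mathcal{A}(M-(m-1)L, e^{-(p-m+1)\varphi}e^{-\psi})^{\oplus \binom{p}{m-1}}$.

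There is really no serious obstacle here; the content is the observation that the Koszul differential, while it strips off one copy of $L$ by multiplying with some $g_{i_k}$, exactly balances the corresponding loss in the weight $e^{-(p-m)\varphi}$ being replaced by $e^{-(p-m+1)\varphi}$, thanks to $\abs{g_{i_k}}^2 \le \abs{g}^2 = e^\varphi$. The only care needed is keeping track of the indexing of the Koszul map and noting that the auxiliary weight $e^{-\psi}$ for $M - pL$ plays no role in the estimate and simply passes through.
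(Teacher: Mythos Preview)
Your proof is correct and follows essentially the same approach as the paper: both rely on the key pointwise observation $\abs{g_i}^2 \le \abs{g}^2 = e^{\varphi}$ to convert the weight $e^{-(p-m+1)\varphi}$ back to $e^{-(p-m)\varphi}$. The only cosmetic difference is that the paper bounds the $I$-th component $\sigma = \sum_{t \notin I} g_t u_{I\cup t}$ in one stroke via Cauchy--Schwarz, $\abs{\sigma}^2 \le \abs{g}^2 \sum_J \abs{u_J}^2$, whereas you bound each summand $g_{i_k} v_{i_1\cdots i_m}$ individually and then invoke the $L^2$ triangle inequality; both routes are equally valid and equally short.
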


\begin{proof}

Let $u \in \mathcal{A} (M - mL , e^{- (p -m) \vp} e^{-\psi}  )^{\oplus{ p \choose m} } $ and write it as $u = \sum_J u_J e_J$ where  the index $J$ denotes $(j_1, \cdots, j_m)$ with $1 \le j_1 < \cdots < j_m \le p$.   We know that for each index $J$, $\int_X \abs{u_J}^2  e^{- (p -m) \vp}  e^{-\psi}  dV  < \infty  $. Consider $P(u)$ and its $I$-th component where the index $I$ denotes $(i_1, \cdots, i_{m-1})$ with $1 \le i_1 < \cdots < i_{m-1} \le p$. Call the $I$-th component as $\sigma$. Then

$$ \sigma = \sum_{t \notin I, 1 \le t \le p} g_t u_{I \cup t} $$

\noi where the index $I \cup t$ of $u_{I \cup t}$ denotes the rearrangement in the right order, $|I \cup t|$ being $m$.  Now the conclusion follows from Cauchy-Schwarz:

 $$   \abs{\sum_{t \notin I, 1 \le t \le p} g_t u_{I \cup t} }^2   e^{- (p- (m - 1)) \vp} e^{-\psi}  \le  \abs{g}^2 \sum_J \abs{u_{J} }^2  e^{- (p-(m - 1)) \vp} e^{-\psi}. $$
 
\end{proof}

\noindent Consequently we have the following Koszul-type complex:

\begin{multline*}
 0 \to  \mathcal{A} (M-pL, e^{-\psi})^{\oplus {p\choose p}} \to   \mathcal{A} (M-(p-1)L, e^{-(\psi + \vp)})^{\oplus {p\choose p-1}}  \to  \cdots  \\
  \to  \mathcal{A} (M- L,  e^{-(\psi + (p-1)\vp)} )^{\oplus p} \to  \mathcal{A} (M,  e^{-(\psi + p\vp)} ) \to 0
\end{multline*}

 Now if we restrict our attention to holomorphic sections and the corresponding sheaves, we get a complex of coherent sheaves:

\begin{definition}[{\cite{EL}, \cite[(9.6.36)]{L}}]\label{skoplex}

  Let $X$ be a complex manifold and $L$,$M$ line bundles on $X$. Let $(M,e^{-\psi})$ be a singular hermitian metric. Let $V$ be the vector space spanned by holomorphic sections $ g_1, \cdots, g_p \in H^0(X, L)$. Let $q \ge p$.  The $q$-th \textbf{Skoda complex} $(\Skod_q)$ is the complex of Koszul maps (which we will construct in the below)

\begin{multline}\label{skocom}
0 \rightarrow \Lambda^p V \otimes \mathcal{J} \left( (q-p) \varphi + \psi \right) \otimes \OO((q- p)L + M)
  \rightarrow \cdots    \\
  \cdots \rightarrow \Lambda^m V \otimes \mathcal{J} ( (q-m)\varphi + \psi) \otimes \OO((q-m)L + M)   \rightarrow \cdots  \\
  \cdots \rightarrow  \Lambda^1 V \otimes \mathcal{J} (  (q-1)\varphi + \psi) \otimes \mathcal{O} ((q-1)L + M)  \rightarrow \mathcal{J} (   q \varphi + \psi) \otimes \OO(qL + M) \rightarrow 0
\end{multline}

\noindent where we recall that $\varphi = \log \abs{g}^2 = \log (\abs{g_1}^2 + \cdots + \abs{g_p}^2)$. 

\end{definition}

\begin{proof}[Construction of \eqref{skocom}]

 Let $ f \colon Y \rightarrow X $ be a log-resolution of the ideal $\mathfrak{a} \subset \OO_X$ generated by $g_1, \cdots, g_p$ by Hironaka's theorem. Later in (\ref{openskoda}), we will use the fact that $f$ is given by composition of blow-ups along smooth subvarieties.  Let $F$ be the exceptional divisor on $Y$ such that $ {\mathfrak a} \cdot \mathcal{O}_Y  = \mathcal{O}_Y (-F) $.  Consider the Koszul complex defined by pullbacks of generators of $\mathfrak{a}$ where $V$ is the vector space   spanned by the pullbacks.

 $$ 0 \rightarrow \Lambda^p V \otimes \mathcal{O}_Y ( pF) \rightarrow \cdots \rightarrow \Lambda^2 V \otimes \mathcal{O}_Y ( 2F) \rightarrow V \otimes \mathcal{O}_Y (F) \rightarrow \mathcal{O}_Y \rightarrow 0  $$

 Then twist through by a coherent sheaf $ O_Y ( K_{Y/ X} - qF ) \otimes \mathcal{J} (f^* \psi) $ and it stays exact since  \cite[p.7, footnote2]{EP} says that: the Koszul complex is locally split and its syzygies are locally free, so twisting by any coherent sheaf preserves exactness.    We get our Skoda complex by pushforwarding this exact sequence under $f$  since (for $0 \le m \le p$)

$$ f_* ( \mathcal{O}_Y ( K_{Y/ X} -(q-m)F) \otimes \mathcal{J} (Y, f^* \psi) ) = \mathcal{J} ( \Omega, (q-m)\varphi + \psi )  .$$ This is from the change of variables formula \cite[(9.3.43)]{L} and the fact that  $ \mathcal{O}_Y (-(q-m)F) \otimes \mathcal{J}(f^* \psi ) = \mathcal{J} ( f^* ((q-m)\varphi + \psi )) $ which in turn comes from comparing  the two sides using holomorphic function germs satisfying the local integrability conditions of the multiplier ideal sheaves.

\end{proof}

\noindent  When $\vp$ has analytic singularities, the complex $(\Skod_q)$ is exact for all $q \ge p$ by Theorem 9.6.36~\cite{L}.  In the general case, we will prove that it is exact for sufficiently large $q \ge p$.

\section{Openness conjecture for plurisubharmonic functions}

 Suppose that a plurisubharmonic function $\evp$ is given on a complex manifold $X$. Let $0 \le c < d$ be real numbers. If $(\evp)^d$ is $L^1$, then $(\evp)^c$ is $L^1$ as well since $ (\evp)^c \le (\evp)^d$.  So (fixing any compact set $K \subset X$) the set $T := \{ c \ge 0 \vert e^{-2c \vp}  \text{ is } L^1 \text{ on a neighborhood of } K  \} $ is an interval. We call $\sup T$ the \textbf{singularity exponent} of $\vp$ and write $c_K (\vp) := \sup T$. Is the interval $T$ open at the right end? The openness conjecture of \cite{DK} says so, that is, $c_K (\vp) \notin T$. On the other hand, the following statement in terms of multiplier ideal sheaves is also natural to consider:

\begin{conjecture}\label{openness}

 Let $\al$ and $\beta$ be plurisubharmonic functions on a complex manifold. Let $\JJ_+ (\al, \beta)$ be the maximal element of $\{ \JJ(\al + t \beta) \; \vert \; t > 0 \}$ as $t \to 0$. Then $\JJ_+ (\al, \beta) = \JJ(\al)$.

\end{conjecture}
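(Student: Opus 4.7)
The easy inclusion $\JJ_+(\al,\beta) \subseteq \JJ(\al)$ follows from the local boundedness above of psh functions: on a relatively compact neighborhood $U$ one has $\beta \le C_U$, hence $e^{-(\al+t\beta)} \ge e^{-tC_U}e^{-\al}$, and therefore $\JJ(\al + t\beta) \subseteq \JJ(\al)$ for every $t > 0$. The same inequality with $\beta$ replaced by $(t_2 - t_1)\beta$ shows $\JJ(\al + t_2\beta) \subseteq \JJ(\al + t_1\beta)$ when $0 < t_1 < t_2$, so the family is increasing as $t \searrow 0$, its stationary limit $\JJ_+(\al,\beta)$ is well-defined as a coherent ideal sheaf, and it sits inside $\JJ(\al)$.

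For the reverse inclusion, my plan is to use a sharp $L^2$ extension argument modeled on the Ohsawa--Takegoshi theorem. Fix a germ $f \in \JJ(\al)_x$ on a polydisk $\Delta^n$ around $x$, so $\int_\Delta |f|^2 e^{-\al} dV < \infty$. The strategy is to find a suitable complex hypersurface $H \subset \Delta^n$ through $x$ and to realize $f$, up to a controlled error, as the extension of $f|_H$ against the weight $e^{-\al}$. An $L^2$ extension theorem then gives an estimate of the form $\int_\Delta |f|^2 e^{-\al} dV \le C \int_H |f|^2 e^{-\al} d\sigma$ in which $C$ depends only on the diameter of $\Delta$. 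Iterating this restriction-and-extension process with a careful bookkeeping of weights should convert the integrability in the original weight into a quantitative bound $\int_\Delta |f|^2 e^{-(\al + t_0 \beta)} dV < \infty$ for some definite $t_0 > 0$, yielding $f \in \JJ(\al + t_0 \beta)_x \subseteq \JJ_+(\al,\beta)_x$ and hence the claim.

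The principal difficulty is choosing the hypersurface $H$ when $\beta$ is transcendental. For $\beta$ with analytic singularities one can take $H$ to be a component of the log-resolution divisor of $\beta$ and read off the gain $t_0$ from the Lelong numbers; the monomial inequalities defining the multiplier ideal on the resolution are strict at $f$, so they remain strict after a sufficiently small perturbation by $tf^*\beta$. In the transcendental case neither $\{\beta = -\infty\}$ nor any similar locus is analytic, and one is forced to use the Siu decomposition $dd^c\beta = \sum \nu_j [E_j] + R$ of the current of $\beta$ into its divisorial part $\sum \nu_j [E_j]$ and a residual diffuse closed positive current $R$, then extend across each $E_j$ separately while absorbing $R$ into the weight by its small generic Lelong numbers. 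This last step is where the $L^2$ technology available in dimension $\ge 3$ does not seem adequate without a genuinely new quantitative input; it is precisely the reason the conjecture is known only in dimension two via the valuative methods of \cite{FJ}, and why the present paper takes Conjecture~\ref{openness} as a working hypothesis rather than proving it directly.
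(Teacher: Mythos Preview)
The statement you are attempting to prove is labeled a \emph{Conjecture} in the paper, and the paper does not supply a proof of it in general; it is used only as a hypothesis (in Proposition~\ref{openskoda}), and the sole case actually established is when $\alpha$ has analytic singularities, via a H\"older-inequality argument combining a log-resolution of $\alpha$ with Skoda's bound on Lelong numbers. So there is no ``paper's own proof'' to compare against.

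Your easy inclusion $\JJ_+(\al,\beta)\subseteq\JJ(\al)$ and the monotonicity in $t$ are correct and standard. For the reverse inclusion, however, what you present is not a proof but a sketch that you yourself abandon: the proposed iteration of Ohsawa--Takegoshi restriction-and-extension is never made precise (no explanation of how extending from a hypersurface manufactures the extra factor $e^{-t_0\beta}$, nor why such a gain would accumulate under iteration), and in your final paragraph you concede that the choice of hypersurface in the transcendental case requires ``a genuinely new quantitative input'' unavailable in dimension $\ge 3$. In other words, you end by agreeing with the paper that the conjecture is open. That is an honest summary of the state of affairs, but it is not a proof proposal; the genuine gap is exactly the one you name, and nothing in the preceding paragraphs closes it.
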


\noindent The special case $\beta = \al$ gives the conjecture $\JJ_+ (\al) = \JJ (\al)$ (where $\JJ_+ (\al) := \JJ_+ (\al, \al)$) which was considered in \cite[(15.2.2)]{D} and \cite{DEL}. This special case of (\ref{openness}) implies the openness conjecture of \cite{DK} though the converse is not known. \footnote{Boucksom informed the author that it might be shown that the special case $\alpha = \beta$ of (\ref{openness}) implies the general cases using the methods of \cite{BFJ}. }

\begin{proposition}
 (\ref{openness}) implies the openness conjecture of \cite{DK}.
\end{proposition}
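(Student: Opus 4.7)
The plan is to argue by contradiction against the DK statement $\sup T \notin T$. I would set $c_0 := c_K(\vp)$ and assume toward contradiction that $c_0 \in T$, so that $e^{-2 c_0 \vp}$ is integrable on some open neighborhood $U$ of $K$. The goal is then to produce $\ep > 0$ with $c_0 + \ep \in T$, contradicting the maximality of $c_0$.

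The translation to multiplier ideals is immediate. Set $\al = 2 c_0 \vp$ and $\beta = 2 \vp$, so that $\al + t \beta = 2(c_0 + t)\vp$. The integrability hypothesis says exactly that $1 \in \JJ(\al)_x$ for every $x \in U$. Applying Conjecture~\ref{openness} to the pair $(\al, \beta)$ gives $\JJ_+(\al, \beta) = \JJ(\al)$, and hence $1 \in \JJ_+(\al, \beta)_x$ at every $x \in K$. By the definition of $\JJ_+$ as the stationary limit of the family $\{\JJ(\al + t \beta)\}_{t \to 0^+}$ (which is increasing as $t$ decreases, bounded above by the coherent sheaf $\JJ(\al)$, and therefore stabilizes at each stalk by Noetherianity of $\OO_x$), for each such $x$ there exist $t_x > 0$ and a neighborhood $V_x$ of $x$ on which
\[
 \int_{V_x} e^{-2(c_0 + t_x)\vp}\, \vol < \infty.
\]

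Finally, I would upgrade this pointwise data to a uniform neighborhood of $K$ by compactness. Extract a finite subcover $V_{x_1}, \ldots, V_{x_N}$ of $K$, put $\ep = \min_i t_{x_i} > 0$, and use that every psh function is locally bounded above, say $\vp \le M_i$ on $V_{x_i}$. Since $t_{x_i} - \ep \ge 0$, one obtains the pointwise estimate
\[
 e^{-2(c_0 + \ep)\vp} = e^{2(t_{x_i} - \ep)\vp}\, e^{-2(c_0 + t_{x_i})\vp} \le e^{2(t_{x_i} - \ep) M_i}\, e^{-2(c_0 + t_{x_i})\vp}
\]
on $V_{x_i}$. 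Consequently $e^{-2(c_0 + \ep)\vp}$ is integrable on the open neighborhood $\bigcup_i V_{x_i}$ of $K$, so $c_0 + \ep \in T$, contradicting $c_0 = \sup T$. The only substantive step is the appeal to Conjecture~\ref{openness}; the rest is a compactness-plus-domination finish whose only subtle point is treating $\JJ_+$ as a genuine coherent sheaf so that stalkwise equality with $\JJ(\al)$ really yields integrability with a strictly better exponent on a small neighborhood of each $x$.
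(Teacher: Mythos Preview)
Your argument is correct and follows essentially the same route as the paper's proof: assume $c_0\in T$, apply Conjecture~\ref{openness} with $\alpha$ and $\beta$ proportional to $\vp$, and derive an improved exponent $c_0+\ep\in T$, contradicting $c_0=\sup T$. The only difference is that you spell out the stabilization of $\JJ(\al+t\beta)$ and the compactness step needed to pass from stalkwise triviality to integrability on a full neighborhood of $K$, details the paper's three-line proof leaves implicit; your factor-of-$2$ normalization is also the one consistent with the paper's multiplier-ideal convention.
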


\begin{proof}

 Let $c = c_K (\vp)$ the singularity exponent. Take $\al = c \vp$ and $\beta = \vp$. Suppose that $c$ belongs to the interval $T$. Then $e^{-2 c \vp}$ is $L^1$, so $\JJ(c \vp)$ is trivial while for any $t > 0$, we have $\JJ(c \vp + t \vp)$ nontrivial. This contradicts  (\ref{openness}).

\end{proof}

\begin{proposition}

 (\ref{openness}) is true if $\alpha$ has analytic singularities (see \cite[Definition 1.10]{D}).

\end{proposition}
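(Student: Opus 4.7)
The plan is to reduce the problem to the normal-crossings setting via a log-resolution of the analytic singularities of $\al$, and then combine H\"older's inequality with Skoda's integrability theorem applied to the psh function $\beta$.

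The inclusion $\JJ_+(\al,\beta) \subseteq \JJ(\al)$ is immediate: $\beta$ is locally bounded above, hence $e^{-\al} \leq C \cdot e^{-\al - t \beta}$ on a compact neighborhood, so $\JJ(\al+t\beta) \subseteq \JJ(\al)$ for every $t>0$. For the reverse inclusion, by the Noetherian property of coherent ideals it suffices to show that every germ $u \in \JJ(\al)_x$ belongs to $\JJ(\al + t\beta)_x$ for some $t > 0$. Fix such a $u$ and take a log-resolution $\pi \colon Y \to X$ of the analytic singularities of $\al$. By the change-of-variables formula \cite[(9.3.43)]{L} (used also in the construction of the Skoda complex above), the required integrability of $|u|^2 e^{-\al - t\beta}$ on a neighborhood of $x$ is equivalent to that of $|\pi^* u|^2 |J_\pi|^2 e^{-\pi^*\al - t \pi^* \beta}$ on the preimage in $Y$. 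In local coordinates on $Y$ where $\pi^*\al$ has SNC singularities, the integrand takes the form $|\pi^* u|^2 \prod_i |z_i|^{-2 \gamma_i} \cdot e^{-t \pi^*\beta}$ up to a nowhere-vanishing smooth factor, where the real exponents $\gamma_i$ absorb the coefficients of $\pi^*\al$ minus the ramification orders of $\pi$ (only those with $\gamma_i > 0$ contribute a singularity).

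The finiteness at $t = 0$ forces the vanishing order of $\pi^* u$ along $\{z_i = 0\}$ to exceed $\gamma_i - 1$, so writing $\pi^* u = \prod_i z_i^{m_i} \cdot \tilde h$ with $m_i$ minimal, the monomial factor becomes $\prod_i |z_i|^{-2 \delta_i}$ with each $\delta_i \in [0,1)$. Consequently $|\tilde h|^2 \prod_i |z_i|^{-2 \delta_i}$ lies in $L^p_{\mathrm{loc}}$ for every $p$ with $p \cdot \max_i \delta_i < 1$, and in particular for some $p > 1$. H\"older's inequality then bounds the integral, up to smooth factors, by $\bigl\| |\tilde h|^2 \prod_i |z_i|^{-2 \delta_i} \bigr\|_{L^p(U)} \cdot \bigl\| e^{-t \pi^*\beta} \bigr\|_{L^q(U)}$ for $1/p + 1/q = 1$; the first factor is finite by construction, while the second is finite for all sufficiently small $t > 0$ by Skoda's integrability theorem applied to the psh function $q \pi^*\beta$, the threshold being governed by the Lelong numbers of $\pi^*\beta$ on a compact set.

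The main obstacle is uniformity: both the H\"older exponent $p$ (constrained by the local $\delta_i$) and the Skoda threshold (constrained by the Lelong numbers of $\pi^*\beta$) a priori vary across the coordinate charts of $Y$ needed to cover $\pi^{-1}(x)$. Because the exponents $\gamma_i$ come from a fixed SNC divisor on $Y$ and the Lelong numbers of $\pi^*\beta$ are bounded on compact sets, finitely many charts cover a compact neighborhood of $\pi^{-1}(x)$, and a minimum over these charts produces a single $t > 0$ with $u \in \JJ(\al + t\beta)_x$, completing the proof.
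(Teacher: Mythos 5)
Your proof is correct and follows essentially the same route as the paper: a log-resolution of the analytic singularities of $\al$ provides the room in the exponent (your $\delta_i<1$ is the paper's extra $\ep$ of integrability), and then H\"older's inequality combined with Skoda's Lelong-number integrability criterion handles the $\beta$-factor for small $t$. The only difference is cosmetic: you carry out the H\"older estimate upstairs on the resolution chart by chart (hence the uniformity remark), while the paper descends and applies H\"older and Skoda directly to $\beta$ on the base.
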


\begin{proof}

 The special case $\al = 0$ is a result of Skoda (\cite{Sk72}, see \cite[Lemma (5.6)]{D}), which we will use.  We need to show that $\JJ(\al) \subset \JJ(\al + \delta \beta)$ for some $\delta > 0$. Let $f$ be a holomorphic function germ which belongs to $\JJ(\alpha)$. Then $\exp (\log \abs{f}^2 - \alpha)$ is locally integrable. Since $\al$ has analytic singularities, we can use a log-resolution of $\alpha$ and $\divisor (f)$ to have $\ep > 0$ such that $\exp ( (1+\ep) (\log \abs{f}^2 - \al) )$ is still integrable.
 Now choose $p$ such that $\frac{1}{1+\ep} + \frac{1}{p} = 1$. Then we can choose $\delta > 0$ such that $(e^{-\beta})^{\delta p}$ is integrable from the special case $\al = 0$ of Skoda:   \cite[(5.6)~\textbf{a}]{D} says that the multiplier ideal sheaf of $\delta p \beta$ is trivial when the Lelong number of $\delta p \beta $ is less than $1$. Then it gives the finiteness of the first factor on the right in the following  H\"older inequality:

$$ \int_{\Omega} \abs{f}^2 e^{-(\al + \delta \beta)} dV \le \left( \int_{\Omega} e^{-\delta p \beta}     dV \right)^{\frac{1}{p}}  \left(\int_{\Omega} \abs{f}^{2(1+\ep)} e^{- (1+\ep)\al}  dV \right)^{\frac{1}{1+\ep}}  < \infty  .$$

\end{proof}

\noindent On the other hand, the special case of $\beta$ having analytic singularities does not seem to make (\ref{openness}) easier, as in the above way of using H\"older inequality. 

 Now we show that in fact the deep Conjecture~\ref{openness} implies the exactness of a Skoda complex via vanishing. Note that this is completely independent of our main result Theorem~\ref{main} where the exactness of a Skoda complex is proved without assuming (\ref{openness}). It seems that the methods in the proof of (\ref{openskoda}) might be useful elsewhere as well.

\begin{proposition}\label{openskoda}

If Conjecture~\ref{openness} is true, then the $q$-th Skoda complex \eqref{skocom} is exact for any $q \ge p$.

\end{proposition}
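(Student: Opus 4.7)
The plan is to exploit the realisation of $(\Skod_q)$ as the direct image $f_* \mathcal{K}^\bullet$ of the Koszul-type complex with terms
$$\mathcal{K}^m = \Lambda^m V \otimes \OO_Y(K_{Y/X} - (q-m)F) \otimes \JJ(f^*\psi), \quad 0 \le m \le p,$$
on the log-resolution $f \colon Y \to X$ appearing in the construction of Definition~\ref{skoplex}. This complex is exact on $Y$, and in fact locally null-homotopic: writing $f^*g_i = s_F h_i$ for a local defining section $s_F$ of $F$, the functions $(h_1, \ldots, h_p)$ generate the unit ideal $\OO_Y$, so a local relation $1 = \sum_i a_i h_i$ with $a_i \in \OO_Y$ produces a holomorphic contracting homotopy of the untwisted Koszul complex, and this local splitting is preserved after tensoring by $\OO_Y(K_{Y/X} - qF) \otimes \JJ(f^*\psi)$ because multiplication by the $a_i$ preserves $\JJ(f^*\psi)$.

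To descend exactness from $Y$ to $X$, I would break $\mathcal{K}^\bullet$ into the short exact sequences $0 \to \mathcal{Z}^{m+1} \to \mathcal{K}^{m+1} \to \mathcal{Z}^m \to 0$ (with $\mathcal{Z}^m$ the kernel at position $m$) and chase the induced long exact sequences for $R^\bullet f_*$ inductively. The exactness of $f_* \mathcal{K}^\bullet = (\Skod_q)$ will then reduce to the relative local vanishing
$$R^i f_* \left( \OO_Y(K_{Y/X} - s F) \otimes \JJ(f^*\psi) \right) = 0 \qquad (i > 0, \ 0 \le s \le q).$$
When $\psi$ has analytic singularities this is standard: $-F$ is $f$-globally generated by the sections $f^*g_i$, and after a further log-resolution simultaneously resolving $\JJ(\psi)$ the classical local vanishing \cite[(9.4.4)]{L} and relative Kawamata--Viehweg give the claim.

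The transcendental case is the point where Conjecture~\ref{openness} must enter. My plan is to approximate $\psi$ from above by a decreasing sequence of psh functions $\psi_\nu$ with analytic singularities (Demailly's regularisation). For each $\nu$ the algebraic vanishing above applies to the twist by $\JJ(f^*\psi_\nu)$. Applying Conjecture~\ref{openness} (in the diagonal form $\JJ_+(\al) = \JJ(\al)$, which suffices by the footnote there and \cite{BFJ}) to the comparison between $\psi$ and $\psi_\nu$ then yields a stabilisation $\JJ(f^*\psi_\nu) = \JJ(f^*\psi)$ for $\nu$ sufficiently large. Substituting this identity into the vanishing for $\psi_\nu$ transports the vanishing to $\JJ(f^*\psi)$, whence $(\Skod_q)$ is exact for every $q \ge p$.

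The main obstacle I anticipate is precisely this analytic step: translating Conjecture~\ref{openness} into the stabilisation $\JJ(f^*\psi_\nu) = \JJ(f^*\psi)$ along a Demailly sequence on $Y$, and verifying that this stabilisation is compatible with $R^i f_*$ over germs on $X$ (which will use coherence and Noetherianity). Once this reduction is in place, the spectral-sequence and local vanishing ingredients combine routinely to give the conclusion for the full range $q \ge p$, matching the optimal bound anticipated in the discussion around Theorem~\ref{main}.
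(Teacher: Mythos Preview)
Your reduction to the vanishing of $R^i f_*\bigl(\OO_Y(K_{Y/X}-sF)\otimes\JJ(f^*\psi)\bigr)$ for $i\ge 1$ matches the paper's first move exactly, and your handling of the case where $\psi$ has analytic singularities is fine. The divergence is in how you invoke Conjecture~\ref{openness} for the transcendental case, and here there is a genuine gap.

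Conjecture~\ref{openness} says that $\JJ(\alpha+t\beta)=\JJ(\alpha)$ for a \emph{fixed} pair of psh functions $\alpha,\beta$ and all sufficiently small $t>0$. Your step ``apply the conjecture to the comparison between $\psi$ and $\psi_\nu$ to get $\JJ(f^*\psi_\nu)=\JJ(f^*\psi)$'' is not an instance of this: the Demailly--Bergman regularisations $\psi_\nu$ are not of the form $\psi+t_\nu\beta$ for a fixed $\beta$, and the only general comparison available is $\psi_\nu\ge\psi-C/\nu$, which gives the inclusion $\JJ(\psi)\subseteq\JJ(\psi_\nu)$ you \emph{don't} need. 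The reverse inclusion (equivalently, a uniform bound on $\int|f|^2e^{-\psi_\nu}$ for $f\in\bigcap_\nu\JJ(\psi_\nu)$) is essentially Demailly's \emph{equisingular approximation}, which does follow from strong openness but via a separate and nontrivial argument that you have not supplied. You flag this as ``the main obstacle'', and it really is one: as written, the proposal does not close.

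The paper avoids approximation entirely. It proves the needed $R^if_*$-vanishing by applying Nadel's theorem (Theorem~\ref{nadel}) directly on the weakly pseudoconvex K\"ahler manifold $Y$. The only issue is that $f^*L$ has merely semipositive curvature along the exceptional locus; the paper repairs this by writing $f^*L=(\text{positive})+E$ with $E=\sum\frac{1}{a_j}(\text{exceptional})$ via iterated use of Lemma~\ref{voi}. The resulting metric has weight $\varphi_E+f^*((q-m)\varphi+\psi)$, and Conjecture~\ref{openness} is invoked \emph{exactly as stated}, with $\alpha=f^*((q-m)\varphi+\psi)$ and $\beta=\varphi_E$ (which has analytic singularities), to guarantee that the small perturbation $\varphi_E$ does not change the multiplier ideal once the $a_j$ are large. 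This is a clean, direct application of the conjecture, whereas your route would require first upgrading the conjecture to an equisingular-approximation statement.
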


 For this, we will use Demailly's version of Nadel vanishing theorem {\cite[(5.11)]{D}} for a \textit{weakly pseudoconvex} K\"ahler manifold: a weakly pseudoconvex manifold is a complex manifold that possesses a smooth plurisubharmonic exhaustion function $\varphi$. For example, a compact complex manifold is weakly pseudoconvex, taking $\varphi = 0$. Also Stein manifolds are weakly pseudoconvex.


\begin{proof}[Proof of (\ref{openskoda})]

 Going back to the Construction of (\ref{skoplex}), first note that the log-resolution $Y$ is weakly pseudoconvex since it has the pullback under $f$ of a smooth plurisubharmonic exhaustion function on $\Omega$ as its own such exhaustion function. 
 
  We need to show the vanishing of the higher direct images:  $ R^i f_* ( O_Y ( K_{Y/ \Omega} \otimes \mathcal{J} (Y, f^*( (q-m)\varphi + \psi)) $ for $ i \ge 1 $.  Using projection formula and the fact that $\Omega$ is Stein, it suffices to show the vanishing of $ H^i (Y,  O_Y ( K_{Y/ \Omega} + f^* L ) \otimes \mathcal{J} (f^*( (q-m)\varphi + \psi)) )  $ for a sufficiently positive line bundle $L$ on $\Omega$, $i \ge 1$ and $0 \le j \le p$.  Taking $L-K_{\Omega}$ positive enough, it suffices to show that

\begin{equation}\label{vanish}
  H^i (Y,  O_Y ( K_{Y} + f^* L) \otimes \mathcal{J} (f^*( (q-m)\varphi + \psi)) ) = 0
\end{equation}

\noi  For this, we take $ F = f^*(L)$. To apply the above mentioned Nadel vanishing theorem {\cite[(5.11)]{D}}, we need to construct a singular metric $h$ of $  f^*(L) $ such that its curvature current is strictly positive and the multiplier ideal sheaf $ \mathcal{J} (Y, h) $ is the same as $ \mathcal{J} (f^*( (q-m)\varphi+ \psi)) $. This will be shown possible by giving $h$ as product of a smooth metric of a positive line bundle (which is $f^* L $ minus $E$, sum of small multiples of exceptional divisors to be specified below), the singular metric precisely given by $E$ for the $\mathbf{Q}$-line bundle $\mathcal{O} (E)$ and $f^*( (q-m)\varphi + \psi)$ ( a plurisubharmonic function, which can be seen as a singular metric of $\mathcal{O}_Y$). We use the following 

\begin{lemma} \cite[Proposition 3.24]{V}\label{voi}

  Let $f_1: Y_1 \rightarrow Y_0 $ be the blow-up of $Y_0$ along a complex submanifold $Z_0$ of codimension $k_0$ and $E_1$ be the exceptional divisor of $f_1$ in $Y_1$. Let $ A_0 $ be any ample line bundle on $Y_0$. Then there is a large enough integer $a > 1 $ such that the $\mathbf{Q}$-line bundle $ f^* A_0 - \frac{1}{a} E_1 $ is positive.

\end{lemma}

 Now let us suppose that the log-resolution $f$ is composed of smooth blow-ups $ f_M \circ f_{M-1} \circ \cdots \circ f_1 $.  Set $Y_0 := \Omega $ and $ A_0 := L $.  By abuse of notation, $E_m$ denote all the proper transforms of the exceptional divisor $E_m$ in $Y_m$ up to $Y_M$. We choose large enough integers $a_1 , \cdots , a_M$ as follows.

\begin{itemize}

\item
 $ a_1 $ is chosen to be large enough to satisfy that $ f_1^* L - \frac{1}{a_1} E_1 $ is positive by (\ref{voi}).

\item

 $ a_2 $ is chosen to be large enough so that :
 
\begin{align*} 
 f_2^* f_1^* L &= f_2^* (\underbrace{( f_1^* L - \frac{1}{a_1} E_1 )}_{positive} + \frac{1}{a_1} E_1 )    \\
    &=\underbrace{f_2^* ( (f_1^* L - \frac{1}{a_1} E_1 ) - \frac{1}{a_2} E_2}_{positive} + \frac{1}{a_2} E_2 + f_2^* ( \frac{1}{a_1} E_1 ) 
\end{align*}

\item

 $ a_3 $ is chosen to be large enough so that :
$$ f_3^* f_2^* f_1^* L = \underbrace{f_3^*  ( \cdots) - \frac{1}{a_3} E_3}_{positive} + \frac{1}{a_3} E_3 + f_3^* ( \frac{1}{a_2} E_2 + f_2^* ( \frac{1}{a_1} E_1 ))    $$  where the last three terms are rewritten as
 $ \frac{1}{a_3} E_3 + \frac{1}{a_2} f_3^* E_2 + \frac{1}{a_1} f_3^* f_2^* E_1 $.

\item

 Similarly for $a_m$ ($m \ge 4$):

 $ f_4^*f_3^* f_2^* f_1^*  L = (\text{a positive line bundle}) + {\frac{1}{a_4} E_4} + \frac{1}{a_3} f_4^* E_3 + \frac{1}{a_2} f_4^* f_3^* E_2 + \frac{1}{a_1} f_4^* f_3^* f_2^* E_1 $.

 $\qa \vdots $

 $ f^*L = f_M^* \cdots f_1^* L = (\text{positive}) + \frac{1}{a_M} E_M + \frac{1}{a_{M-1}} f_M^* E_{M-1} + \cdots + \frac{1}{a_1} f_M^* f_{M-1}^* \cdots f_2^* E_1 $.

\end{itemize}

\noindent It is now clear that we can take $E$ to be $$E := \frac{1}{a_M} E_M + \frac{1}{a_{M-1}} f_M^* E_{M-1} + \cdots + \frac{1}{a_1} f_M^* f_{M-1}^* \cdots f_2^* E_1 $$ for large enough integers $ a_1 , \cdots , a_M$ so that $ \mathcal{J} ( \varphi_{E} + f^*( (q-m)\varphi + \psi)  ) = \mathcal{J} (f^*( (q-m)\varphi + \psi)) $ ( $\varphi_{E}$ is the weight function associated to the $\mathbf{Q}$-divisor $E$) according to Conjecture~\ref{openness}. This proves (\ref{openskoda}).

\end{proof}

\section{Proof of Main Theorem}

\subsection{Algebraic preliminaries}

 Let $A$ be a commutative ring and $M$ be the dual of the free module $M' := A^{\oplus p}$ of rank $p$. We view an element of $\bigwedge^k M$ as an alternating function on $(v_1, \cdots, v_k)$ where $v_i \in  A^{\oplus p}$.  Let $\vep_1, \cdots, \vep_p$ be the basis of $M'$ and let $e_1, \cdots, e_p$  be the dual basis of $M$.  Let $h \in M'$. Let $i(h)$ be the contraction by $h$, that is (for each $m \ge 1$), the map $i(h): \bigwedge^m M \to \bigwedge^{m-1} M$ determined by

 $$ \left(i(h) \left( \eta \right) \right) (v_1, \cdots, v_{m-1}) =   \eta (h, v_1, \cdots, v_{m-1})   $$

\noindent for every $m$-form $\eta \in \bigwedge^m M$. Then it is well-known that

\begin{proposition}\label{contraction}
 For every $l, n \ge 1$ and $\vp \in \bigwedge^l M,  \psi \in \bigwedge^n M $, we have
 $$ i(h) (\varphi \wedge \psi) = (i(h) \varphi) \wedge \psi + (-1)^l \varphi \wedge ( i(h) \psi) .$$

\end{proposition}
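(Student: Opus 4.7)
The plan is to verify this standard graded Leibniz rule for interior contraction by reducing to decomposable elements and proceeding by induction on the degree $l$. By the bilinearity of both sides of the identity as functions of $\varphi \in \bigwedge^l M$ and $\psi \in \bigwedge^n M$, it suffices to establish the formula when both $\varphi$ and $\psi$ are decomposable; that is, when $\varphi = \omega_1 \wedge \cdots \wedge \omega_l$ and $\psi = \eta_1 \wedge \cdots \wedge \eta_n$ with $\omega_i, \eta_j \in M$.

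For the base case $l = 1$, I would take $\varphi \in M$ and compute $(i(h)(\varphi \wedge \psi))(v_1, \ldots, v_n)$ by expanding $(\varphi \wedge \psi)(h, v_1, \ldots, v_n)$ via the alternating-sum formula for wedge product. The result naturally splits into two groups of terms: the single term with $h$ in the slot paired with $\varphi$, which yields $(i(h)\varphi) \wedge \psi = \varphi(h) \cdot \psi$ evaluated on $(v_1, \ldots, v_n)$; and the remaining terms in which $h$ occupies one of the slots paired with $\psi$, which assemble into $-\varphi \wedge (i(h)\psi)$ after using the alternating property of $\psi$ and collecting signs. This matches the claimed formula for $l = 1$ with sign $(-1)^1 = -1$.

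For the inductive step, assume the identity holds in degrees up to $l-1$ and write $\varphi = \varphi' \wedge \omega$ with $\varphi' \in \bigwedge^{l-1} M$ and $\omega \in M$. Then $\varphi \wedge \psi = \varphi' \wedge (\omega \wedge \psi)$, and applying the inductive hypothesis first to $\varphi'$ against $\omega \wedge \psi$ and then the base case to $\omega$ against $\psi$ yields two derivation terms. Regrouping using associativity of $\wedge$ and the sign identity $(-1)^{l-1} \cdot (-1) = (-1)^l$ produces the desired formula. Alternatively, one can observe that the two sides define graded derivations of degree $-1$ on the exterior algebra $\bigwedge^\bullet M$ that agree on generators (degree $1$ elements, where the identity is trivial), and conclude by the uniqueness of such an extension.

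The content is essentially bookkeeping, so the main obstacle will simply be keeping the signs from the alternating permutations consistent throughout the induction; no conceptual difficulty arises since everything is purely multilinear algebra over the commutative ring $A$.
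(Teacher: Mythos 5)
Your argument is correct, but it follows a genuinely different route from the paper's. The paper reduces by multilinearity to the case where both $\varphi$ and $\psi$ are wedge monomials in the dual basis $e_1,\dots,e_p$, relabels the indices of $\psi$ as $i_{l+1},\dots,i_{l+n}$, and then checks the identity in one shot by writing the explicit Koszul-type expansions of $i(h)\varphi$, $\varphi\wedge(i(h)\psi)$ and $i(h)(\varphi\wedge\psi)$, the point being that the sum over $k=1,\dots,l+n$ in the last expansion splits into the ranges $k\le l$ and $k>l$, giving the two terms with the sign $(-1)^l$. You instead reduce to decomposables and induct on $l$, verifying $l=1$ directly from the evaluation formula for $(\varphi\wedge\psi)(h,v_1,\dots,v_n)$ in the alternating-functions model. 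Both are sound: your induction only needs the degree-one expansion formula and never touches a basis, while the paper's computation is a single explicit verification once the monomial formulas are in place. Two small points to tighten in your write-up: in the inductive step, regrouping $(i(h)\varphi')\wedge\omega\wedge\psi+(-1)^{l-1}\bigl(i(h)\omega\bigr)\,\varphi'\wedge\psi$ into $(i(h)\varphi)\wedge\psi$ uses the inductive hypothesis a second time (with second factor the degree-one element $\omega$); this is legitimate because your induction is on the degree of the first factor with the second factor arbitrary, but it should be said explicitly. Also, your alternative argument via ``uniqueness of graded derivations agreeing on generators'' is circular as stated: that $i(h)$ is a graded derivation of degree $-1$ is precisely the proposition, so one would still have to check that $i(h)$ coincides with the abstractly extended derivation, which amounts to the same computation you carry out in the induction.
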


\noindent Now taking $h = g_1 \vep_1 + \cdots + g_p \vep_p $, it is easy to see that the map $i(h):\bigwedge^m M \to \bigwedge^{m-1} M $ is our Koszul map $P$ of \eqref{koszul}. Also we let $P^{\vee}$ denote the map $e(\psi): \bigwedge^{m-1} M \to \bigwedge^{m} M$ given by taking wedge with $\frac{1}{\abs{g}^2} \psi$ where  $\psi = \overline{g_1} e_1 + \cdots + \overline{g_p} e_p$. That is,  for each $u \in \bigwedge^{m-1} M$, we have

\begin{align}
 \notag P^\vee (u) &= \frac{1}{\abs{g}^2} u \wedge  \psi  =  \frac{1}{\abs{g}^2}  ( \sum_I  u_I e_I ) \wedge   (  \overline{g_1} e_1 + \cdots + \overline{g_p} e_p   )  \\
\label{early} &= \frac{1}{\abs{g}^2} \sum_J  \sum_{k=1}^m  (-1)^{m-k} \overline{g_{j_k}} u_{ j_1 \cdots \widehat{j_k} \cdots j_m}
\end{align}

\noi where the index $J$ in the first summation denotes and ranges over $J = (j_1, \cdots, j_m)$ where $1 \le j_1 < \cdots < j_m \le p$. The last equality comes from the following argument: $e_I \wedge e_\el$ is not zero for $\el$ such that $\{ i_1, \cdots, i_{m-1}, \el \}$ has $m$ elements. Rewriting the set $\{ i_1, \cdots, i_{m-1}, \el \}$ in the increasing order as $\{ j_1, \cdots, j_m \}$ where $1 \le j_1 < \cdots < j_m \le p$, we note that

 $$ e_{i_1} \wedge \cdots \wedge e_{i_{m-1}} \wedge e_\el = (-1)^{m- k} e_{j_1} \wedge \cdots \wedge e_{j_m} $$ when $\el = j_k$ for some $1 \le k \le m$.

\noindent As a consequence of (\ref{contraction}), we have (taking $l = m-1$)

\begin{corollary}\label{comp}
\qa
\begin{enumerate}
\item
$ P ( P^{\vee} u ) = P^{\vee} ( Pu ) + (-1)^{m-1}  u  $ for all $u \in \bigwedge^{m-1} M$.

\item

 $  P ( P^{\vee} u )  =  (-1)^{m-1} u $ if $Pu = 0$.

\end{enumerate}

\end{corollary}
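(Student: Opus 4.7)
The plan is to derive both parts of the corollary as immediate consequences of Proposition~\ref{contraction}, together with the single scalar identity $i(h)\psi = \abs{g}^2$. The statement is essentially an anti-commutation relation between $P = i(h)$ and $P^{\vee} = \frac{1}{\abs{g}^2}(\,\cdot\, \wedge \psi)$, and since $\psi$ has degree one, all the real work is already contained in the Leibniz rule established just above.

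First I would compute $i(h)\psi$ directly. Since $e_j$ is the basis of $M$ dual to $\vep_j$ and $h = g_1 \vep_1 + \cdots + g_p \vep_p$, we have $i(h) e_j = e_j(h) = g_j$. Hence
$$ i(h)\psi \;=\; i(h)\!\left( \overline{g_1} e_1 + \cdots + \overline{g_p} e_p \right) \;=\; \sum_{j=1}^p \overline{g_j}\, g_j \;=\; \abs{g}^2. $$

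Next, using $A$-linearity of $i(h)$, rewrite $P(P^{\vee} u) = \frac{1}{\abs{g}^2} \, i(h)(u \wedge \psi)$. Applying Proposition~\ref{contraction} with $\varphi = u$ of degree $l = m-1$ and the one-form $\psi$ in place of the second factor yields
$$ i(h)(u \wedge \psi) \;=\; (i(h) u) \wedge \psi + (-1)^{m-1}\, u \wedge (i(h)\psi) \;=\; (Pu)\wedge \psi + (-1)^{m-1} \abs{g}^2\, u. $$
Dividing by $\abs{g}^2$ and recognizing the first term as $P^{\vee}(Pu)$ gives part (1). Part (2) is then immediate: if $Pu = 0$, then $P^{\vee}(Pu) = 0$ by $A$-linearity of $P^{\vee}$, leaving $P(P^{\vee} u) = (-1)^{m-1} u$.

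I do not expect any genuine obstacle here: the argument is a short bookkeeping consequence of the already-established Leibniz-type formula, and the only potential snag is the implicit assumption $\abs{g}^2 \neq 0$, which is inherent in the definition of $P^{\vee}$ and will be dealt with pointwise (off the common zero locus of $g_1, \cdots, g_p$) in the $L^2$ Skoda setup of the next subsection.
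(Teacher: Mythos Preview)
Your proof is correct and follows essentially the same approach as the paper's. The paper's argument is the one-line remark that the result follows from Proposition~\ref{contraction} with $l=m-1$ together with the observation that $i(h)\psi$ acts as multiplication by $1$ (i.e.\ $i(h)\psi = \abs{g}^2$ cancels the $\frac{1}{\abs{g}^2}$ in the definition of $P^{\vee}$); you have simply spelled this out in full detail.
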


\noi since the map $ i(h) \psi :  \bigwedge^m M \to \bigwedge^{m} M$ is the multiplication by  $1$.
\\

Now we turn to define our Hilbert spaces and their double complex. For $i \ge 0$ (though we actually need $i = 0,1,2$ only) and $0 \le m \le p$, let $\widetilde{ \hs{m}{i} }$ be the Hilbert space completion of the smooth $((q- m)L + M)$-valued $(0,i)$ forms that are square-integrable with respect to  $e^{- (q -m) \vp} e^{-\psi}$. Let $\hs{m}{i}$ be the direct sum of $p \choose m$ copies of $\widetilde{ \hs{m}{i} }$ for which we use the basis $\{ e_{i_1} \wedge \cdots \wedge e_{i_{m}} | 1 \le i_1 < \cdots < i_m \le p  \} $. Each $\hs{m}{i}$ is given the inner product of the direct sum Hilbert space. Let $T: \hs{m}{0} \to \hs{m}{1}$ and $S: \hs{m}{1} \to \hs{m}{2}$ be the direct sum of $\db$ operators.

\begin{align}\label{dcom}
    \begin{CD}
\cdots @>>> \hs{2}{0}  @>P>> \hs{1}{0} @>P>> \hs{0}{0}   @>>> 0   \\
     @VTVV          @VTVV          @VTVV           @VTVV            \\
\cdots @>>> \hs{2}{1}  @>P>> \hs{1}{1} @>P>> \hs{0}{1}   @>>> 0   \\
     @VSVV          @VSVV          @VSVV           @VSVV            \\
\cdots @>>> \hs{2}{2}  @>P>> \hs{1}{2} @>P>> \hs{0}{2}   @>>> 0   \\
     @V{\db}VV          @V{\db}VV          @V{\db}VV           @V{\db}VV    \\
\vdots @>>> \vdots @>>> \vdots @>>> \vdots @>>>  0
        \end{CD}
\end{align}

\noi Let each $P$ be the Koszul map of \eqref{koszul}. First we compute $P^* u$ using the fact that $(P^* u, v) = (u, Pv)$ for all $v$.
\\

\noi \textbf{Throughout the rest of this paper}, the index $I$ denotes $(i_1, \cdots, i_{m-1})$ where $1 \le i_1 < \cdots < i_{m-1} \le p$ and the index $J$ denotes $(j_1, \cdots, j_m)$ where $1 \le j_1 < \cdots < j_m \le p$.

\begin{proposition}\label{adjoint}

If $P u = 0$, then $\norm{P^* u}^2 = \norm{u}^2$.

\end{proposition}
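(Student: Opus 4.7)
The plan is to identify the adjoint $P^\ast$ with the operator $P^\vee$ of \eqref{early} (up to an overall sign), and then invoke Corollary~\ref{comp}(2).

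To compute $P^\ast$, note that pointwise the Koszul map is interior contraction $i(h)$ by $h = g_1\vep_1 + \cdots + g_p\vep_p$, and its pointwise adjoint with respect to the standard (unweighted) Hermitian inner product on $\bigwedge^\bullet M$ is exterior multiplication by the conjugate one-form $\tau := \overline{g_1}e_1 + \cdots + \overline{g_p}e_p$. The weighted inner products on $\hs{m-1}{0}$ and $\hs{m}{0}$ differ by exactly the factor $e^{\vp} = \abs{g}^2$, and the division by $\abs{g}^2$ built into $P^\vee$ precisely absorbs this discrepancy. A short unwinding of definitions then yields $P^\ast u = \tfrac{\tau \wedge u}{\abs{g}^2} = (-1)^{m-1}\,P^\vee u$, the sign $(-1)^{m-1}$ arising from commuting the one-form $\tau$ past the $(m-1)$-form $u$.

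With this identification, the proof is a single line. By the defining property of the adjoint,
\[
  \norm{P^\ast u}^2 = (P^\ast u, P^\ast u)_{\hs{m}{0}} = (u,\, PP^\ast u)_{\hs{m-1}{0}},
\]
and combining $P^\ast = (-1)^{m-1}P^\vee$ with Corollary~\ref{comp}(2) under the hypothesis $Pu = 0$ gives
\[
  PP^\ast u = (-1)^{m-1}\,PP^\vee u = (-1)^{m-1}\cdot(-1)^{m-1}\,u = u.
\]
Therefore $\norm{P^\ast u}^2 = (u,u)_{\hs{m-1}{0}} = \norm{u}^2$.

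The only step requiring care is the identification $P^\ast = \pm P^\vee$: the $\tfrac{1}{\abs{g}^2}$ factor built into $P^\vee$ is exactly what converts the pointwise adjoint relation between interior contraction and exterior multiplication into the adjoint relation between the weighted $L^2$ pairings. Once this is in place, the algebraic fact that $P^\vee$ is a signed right inverse of $P$ on $\ker P$ (Corollary~\ref{comp}(2)) makes the isometry claim immediate, and the signs conveniently enter squared so that no sign survives in the final statement.
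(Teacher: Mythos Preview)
Your proof is correct and follows essentially the same approach as the paper: identify $P^\ast = (-1)^{m-1}P^\vee$ and then apply Corollary~\ref{comp}(2) to conclude $PP^\ast u = u$ on $\ker P$, whence $\norm{P^\ast u}^2 = (u,PP^\ast u) = \norm{u}^2$. The only difference is cosmetic: the paper establishes $P^\ast = (-1)^{m-1}P^\vee$ by computing the coefficients of $P^\ast u$ explicitly from the relation $(P^\ast u,v) = (u,Pv)$ and then matching them against \eqref{early}, whereas you obtain the same identification more conceptually via the pointwise duality between contraction and exterior multiplication together with the weight shift $e^{-\vp} = \abs{g}^{-2}$ between $\hs{m-1}{0}$ and $\hs{m}{0}$.
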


\begin{proof}

Let $u = \sum_I u_I e_I$ and $v = \sum_J v_J e_J$. Then $$P(v)= \sum_J v_J P(e_J) = \sum_J v_J \sum_{k=1}^{m} (-1)^{k-1} g_{j_k} e_{j_1} \wedge \cdots \wedge \widehat{ e_{j_k} } \wedge \cdots \wedge e_{j_m} .$$

\noi From the condition that $(P^* u, v) = (u, Pv)$ for all $v \in \hs{m}{0}$, we can determine $P^* u$. Namely, the coefficient for $v_J$ in the summation $(u, Pv)$ must be the coefficient for $e_J$ in $P^* u$. Also we take into account the fact that $(P^* u, v)$ is in $\hs{m}{0}$ and $(u, Pv)$ is in $\hs{m-1}{0}$ with one less power of $\frac{1}{\abs{g}^2}$ in the weight for the inner product.    Thus we have

\begin{equation}\label{padjoint}
P^* u = \frac{1}{\abs{g}^2} \sum_J   x_{j_1 \cdots j_m} e_{j_1} \wedge \cdots \wedge e_{j_m}  
\end{equation}

\noi where $x_{j_1 \cdots j_m} =  \sum_{k=1}^m   u_{ j_1 \cdots \widehat{j_k} \cdots j_m} (-1)^{k-1} \overline{g_{j_k}}$. From \eqref{early}, we have $ P^* u = (-1)^{m-1} P^{\vee} u$. Then $\norm{P^* u}^2 = (P^* u , P^* u) = (u, P (P^* u)) =  $

\noi  $ (u, (-1)^{m-1} P ( P^{\vee} u))  = (u,u)  $ by (\ref{comp}).



\end{proof}

\subsection{Some analytic preliminaries}



We recall the following fundamental lemmas in the methods of $L^2$ estimates for $\db$ as in \cite{Sk72}. 

\begin{lemma}[\cite{Sk72}~Proposition 1, see also \cite{Va} (3.2)]\label{FA}

 Let $\mathcal{E}_0,  \mathcal{F}_0, \mathcal{F}_1, \mathcal{F}_2$ be Hilbert spaces. Let $P: \mathcal{F}_0 \to \mathcal{E}_0$ be a bounded operator. Let $T: \mathcal{F}_0 \to \mathcal{F}_1$ and $S: \mathcal{F}_1 \to \mathcal{F}_2$ be unbounded, closely defined operators such that $S \circ T = 0 $. Let $\mathcal{G} \subset \mathcal{E}_0$ be a closed subspace such that $P (\Ker T) \subset \mathcal{G}$. We have $P (\Ker T) = \mathcal{G}$ if and only if there exists a constant $C > 0$ such that

$$\norm{P^*u + T^* \beta}^2 + \norm{S \beta}^2   \ge C \norm{u}^2 $$

\noindent for all $u \in \mathcal{G}$ and all $\beta \in \Dom T^* \cap \Dom S \subset  \mathcal{F}_1$. Moreover, in this case, for every $u \in \mathcal{G}$, there exists $v \in \Ker T$ such that $P v = u $ and $\displaystyle \norm{v} \le \frac{1}{\sqrt{C}} \norm{u}$.

\end{lemma}

\noi All the norms in the above are taken in the respective Hilbert spaces.

  Another important ingredient of the $L^2$ estimates for $\db$ is the following Bochner-Kodaira inequality, also known as the \emph{basic estimate}. Let $\Omega \subset \CC^n$ be a Stein bounded open subset and $L$ a line bundle on $\Omega$. Let $e^{-\psi}$ be a singular hermitian metric of $L$. Let $\mathcal{F}_i$ be the Hilbert space of $L$-valued $(0,i)$ forms that are square integrable with respect to $e^{-\psi}$. Let $T: \mathcal{F}_0 \to \mathcal{F}_1$ and $S: \mathcal{F}_1 \to \mathcal{F}_2$ be the $\db$ operators.

\begin{lemma}[Bochner-Kodaira \cite{H65}]\label{BK}

For all $\beta \in \Dom T^* \cap \Dom S$, we have

$$\norm{T^* \beta}^2 + \norm{S \beta}^2  \ge  \int_\om ( \iddb \psi ) (\beta, \beta) e^{-\psi}   $$

\end{lemma}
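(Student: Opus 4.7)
The plan is to prove the inequality first for smooth compactly supported $L$-valued $(0,1)$ forms with a smooth weight $\psi$, and then pass to the general case by a density/approximation argument. By restricting to a relatively compact open subset and working in local trivializations, I will write $\beta = \sum_j \beta_j \, d\bar z_j$ and identify $T^*$ and $S$ with their explicit coordinate expressions. A direct computation gives the formal adjoint
\[
T^* \beta \;=\; -\sum_j \delta_j \beta_j, \qquad \delta_j := e^{\psi} \frac{\partial}{\partial z_j} e^{-\psi} = \frac{\partial}{\partial z_j} - \frac{\partial \psi}{\partial z_j},
\]
while $S\beta = \sum_{j<k} \bigl( \partial \beta_k/\partial \bar z_j - \partial \beta_j/\partial \bar z_k \bigr) d\bar z_j \wedge d\bar z_k$.

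The heart of the argument is the standard integration-by-parts identity
\[
\Bigl[\delta_j, \tfrac{\partial}{\partial \bar z_k}\Bigr] = -\frac{\partial^2 \psi}{\partial z_j \, \partial \bar z_k},
\]
which I would apply after expanding $\norm{T^* \beta}^2$. Expanding the squares and rearranging, one finds
\[
\norm{T^* \beta}^2 + \norm{S \beta}^2 \;=\; \sum_{j,k} \int_\Omega \Bigl|\tfrac{\partial \beta_j}{\partial \bar z_k}\Bigr|^2 e^{-\psi} \;+\; \int_\Omega \sum_{j,k} \frac{\partial^2 \psi}{\partial z_j \partial \bar z_k} \beta_j \overline{\beta_k}\, e^{-\psi},
\]
at least when $\beta$ is smooth with compact support. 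Since the first term is nonnegative and the second is exactly $\int_\Omega (\iddb \psi)(\beta,\beta)\, e^{-\psi}$, the desired inequality follows.

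To reach arbitrary $\beta \in \Dom T^* \cap \Dom S$, I would use the standard density result that smooth compactly supported forms are dense in this intersection under the graph norm when $\Omega$ is Stein and bounded, using a plurisubharmonic exhaustion and cutoff functions (this is the classical argument in Hörmander, \cite{H65}). For singular $\psi$, I would approximate $\psi$ from above by a decreasing sequence of smooth psh functions $\psi_\nu$ obtained by convolution on slightly shrunk subdomains, apply the inequality for each $\psi_\nu$, and pass to the limit: the right-hand side is lower semicontinuous in $\nu$ by weak convergence of $\iddb \psi_\nu$, and the norms on the left converge since $e^{-\psi_\nu} \to e^{-\psi}$ monotonically.

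The only subtle point is the density step, which is why one typically assumes $\Omega$ is bounded and Stein as in the statement: on such domains one can cut off using a strictly psh exhaustion, and the error terms produced by the cutoff can be absorbed because $\psi$ is locally integrable. Everything else is a direct coordinate computation. I expect no essential obstacle beyond being careful with the boundary terms in the integration by parts and the domain-of-definition issue for $T^*$.
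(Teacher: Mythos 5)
The paper itself does not prove this lemma; it is quoted from H\"ormander \cite{H65}, and the Remark immediately following it fixes the convention that $\psi$ always denotes a smooth regularization $\psi_\nu$, so the statement to be proved is the smooth-weight case. Your computational core is the standard one and is essentially what the citation refers to: the coordinate expression of the formal adjoint, the commutator identity, and the resulting identity $\norm{T^*\beta}^2+\norm{S\beta}^2=\sum_{j,k}\int_\Omega\bigl|\partial\beta_j/\partial\bar z_k\bigr|^2e^{-\psi}+\int_\Omega(\iddb\psi)(\beta,\beta)e^{-\psi}$ for smooth compactly supported $\beta$ are correct (up to a sign slip: with $\delta_j=\partial/\partial z_j-\partial\psi/\partial z_j$ one has $[\delta_j,\partial/\partial\bar z_k]=+\partial^2\psi/\partial z_j\partial\bar z_k$, which is what your final identity actually uses).

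The genuine gap is the density step. On a bounded domain with one fixed weight, smooth compactly supported forms are in general \emph{not} dense in $\Dom T^*\cap\Dom S$ for the graph norm: any cutoffs $\eta_\nu$ equal to $1$ on an exhausting sequence of compacts must have $|\db\eta_\nu|$ blowing up near $\partial\Omega$, and the error term $\int_\Omega|\db\eta_\nu|^2|\beta|^2e^{-\psi}$ produced when commuting $T^*$ and $S$ with multiplication by $\eta_\nu$ is not controlled by the graph norm; the local integrability of $\psi$, which you invoke to ``absorb'' it, is irrelevant here. The classical remedies are precisely what H\"ormander (or later treatments) supply and what your sketch omits: either the three-weight device (weights $\psi-2\chi$, $\psi-\chi$, $\psi$ with $e^{\chi}$ dominating $|\db\eta_\nu|^2$, followed by a limit over $\chi$), or proving the estimate on an exhaustion of $\Omega$ by smoothly bounded strictly pseudoconvex subdomains via the Morrey--Kohn--H\"ormander identity, whose boundary term is nonnegative by pseudoconvexity, or putting a complete K\"ahler metric on the Stein domain, for which compactly supported forms \emph{are} dense (Andreotti--Vesentini, Demailly), and removing the auxiliary metric by a limit. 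Without one of these devices the passage from compactly supported $\beta$ to arbitrary $\beta\in\Dom T^*\cap\Dom S$ does not go through. Finally, your last paragraph on singular $\psi$ is both unnecessary (by the paper's Remark the lemma is only ever applied to the smooth $\psi_\nu$, the limit $\nu\to\infty$ being taken on the solutions afterwards) and problematic as stated, since for singular $\psi$ the right-hand side couples the measure $\iddb\psi$ with the non-smooth density $e^{-\psi}$ and the spaces $\Dom T^*$ themselves change with the weight, so one cannot simply ``pass to the limit'' in the inequality.
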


\noi Here we define $( \iddb \psi ) (\beta, \beta)$ to be (for $ \beta = \beta^1  d \zb_1 + \cdots + \beta^n d \zb_n $)    $$( \iddb \psi ) (\beta, \beta) := \sum_{1 \le p,q \le n} \frac{\partial^2 \psi}{\partial z_p \partial \overline{z}_q }  \beta^p \overline{\beta^q}    $$

\noi and regard $\int_{\Omega} ( \iddb \psi ) (\beta, \beta) e^{-\psi}$ as the norm of $\beta$ with respect to $\iddb \psi$.

\begin{remark}
In the use of H\"ormander's $L^2$ estimates for $\db$ with these lemmas, we need the standard procedure of regularizing the plurisubharmonic weight $\psi$ by a sequence of smooth plurisubharmonic functions $(\psi_\nu)_{\nu \ge 1}$. For simplicity in notations, here and in the next section, we adopt the convention that each $\psi$ means the $\nu$-th regularized $\psi_\nu$ so that we can take $\iddb \psi_\nu$ and so on. The resulting holomorphic function $u_\nu$ at each step comes with a uniform bound that is independent of $\nu$, so we can take the limit $u$ as $\nu \to \infty$ in the usual way. 
\end{remark}

\subsection{Proof of (\ref{main})}

 In the $q$-th Skoda complex \eqref{skocom}, let $S_m$ denote the sheaf  $\Lambda^m V \otimes \mathcal{J} ( (q-m)\varphi + \psi) \otimes \OO( (q- m)L + M)$ ($0 \le m \le p$).  We want to show the exactness in the middle of $S_{m+1} \stackrel{P}{\rightarrow} S_m \stackrel{P}{\rightarrow} S_{m-1}$ for every $m \ge 0$ (defining $S_{-1} := 0$).  Since the exactness of a complex is a local property, it is sufficient to show (see (\ref{effprop}) for a stronger statement with estimates)
 
\begin{equation}\label{goal}
 \Image P|_{\Omega} = \Ker P|_{\Omega} \text{\;\;\; in \;} H^0 (\Omega, S_m)  
\end{equation} 
 
\noi where $\Omega \subset X$ is a Stein open subset.

 To apply the functional analysis lemma (\ref{FA}) for this, we consider the corresponding Hilbert spaces on $\Omega$ in \eqref{dcom} and the Koszul maps $P_{m-1}: \hs{m-1}{0} \to \hs{m-2}{0}$, $P_m: \hs{m}{0} \to \hs{m-1}{0}$. In the setting of (\ref{FA}), we take $P := P_m$,  $\mathcal{E}_0 := \hs{m-1}{0}$,  $\mathcal{F}_i := \hs{m}{i}  \;\; (i = 0,1,2)$ and take $\mathcal{G} := \Ker P_{m-1} $ in $\hs{m-1}{0}$.  It suffices to show that $P_{m} (\Ker T) = \mathcal{G}$. Now consider

\begin{align}
 \norm{P^*u + T^* \beta}^2 + \norm{S \beta}^2  =&  \norm{T^* \beta}^2 + \norm{S \beta}^2 + \norm{P^*u}^2 + 2 \Real  (P^* u , T^* \beta)   .
\end{align}

\noi First, note that $\norm{P^*u}^2 = \norm{u}^2$ from (\ref{adjoint}). Our plan toward having  $C \norm{u}^2$ as in (\ref{FA}) is to divide $2 \Real  (P^* u , T^* \beta)$ into a $u$ part and a $\beta$ part. Then the $\beta$ part being less than $\norm{T^* \beta}^2 + \norm{S \beta}^2$ will finish the proof. More precisely, now apply the Bochner-Kodaira inequality~(\ref{BK}) to get $$\norm{T^* \beta}^2 + \norm{S \beta}^2  \ge \int_\Omega \sum_J   \left( (q-m)\iddb \log \abs{g}^2 + \iddb \psi \right) (\beta_J, \beta_J) e^{-\vp_1} dV      $$ where $dV$ is the Lebesgue volume form and $\vp_1 :=  (q-m) \log \abs{g}^2 + \psi $ is the weight of the Hilbert spaces $\hs{m}{i}$.   Here we have $(q-m)$ times the norm of $\beta$ with respect to $\iddb \log \abs{g}^2$. This will be cancelled out by another multiple of the same norm of $\beta$ coming out of  $2 \Real  (P^* u , T^* \beta) $. More precisely, we will show that $2 \Real  (P^* u , T^* \beta) \ge -\frac{1}{B} \norm{u}^2 - T_1 $ where $B$ is a constant $B > 1$ which we fix throughout and (see \eqref{T1})

$$T_1 := B \int_\om \abs{g}^{-2} \sum_{i_1 < \cdots < i_{m-1} } \abs{ e^{\vp} T_{i_1 \cdots i_{m-1}}}^2  e^{-\vp_1} d\lambda $$ is the second term of the RHS of \eqref{T1}. Now our main inequality to show is $$  T_1 \le( m(p-(m-1)) + 1) \int_\Omega  \sum_J       (\iddb \log \abs{g}^2) (\beta_J, \beta_J)e^{-\vp_1} dV   .$$  Then we can take $$q = \max_{0 \le m \le p} m(p-m+1)+1   $$ (which gives $q$ in (\ref{main})) so that $(\Skod_q)$ is exact, where we apply (\ref{FA}) with $C = 1 - \frac{1}{B}$.

\qa
\\

Now we begin the main computations following the above outline. In order to consider $2 \Real  (P^* u , T^* \beta) $, we write arbitrary $u \in \hs{m-1}{0}, \beta \in \hs{m}{1}$ as

\begin{align*}
       u &=  \sum_{i_1 < \cdots < i_{m-1} } u_{i_1 \cdots i_{m-1}} e_{i_1} \wedge \cdots \wedge e_{i_{m-1}}     \\
     \beta &= \sum_{j_1 < \cdots < j_m} \left( \beta^1_{j_1 \cdots j_m} d\zb_1 + \cdots + \beta^n_{j_1 \cdots j_m} d\zb_n \right) e_{j_1} \wedge \cdots \wedge e_{j_{m}},
\end{align*}

\noi we use \eqref{padjoint} to obtain  (letting $\vp := \log \abs{g}^2$)

\begin{align*}
 &2 \Real (P^* u , T^* \beta) =2 \Real ( \db (P^* u), \beta)   =    2 \Real \int_U  \sum_{j_1 < \cdots < j_m}    \mathcal{S}_J   e^{-\vp_1} dV   \text{\quad where }   \\
 \mathcal{S}_J &:= u_{j_2 \cdots j_m}  \left( \overline{ \frac{\partial}{\partial z_1} (g_{j_1} e^{-\vp} )  \beta^1_{j_1 \cdots j_m}  +
     \cdots  +
     \frac{\partial}{\partial z_n} (g_{j_1} e^{-\vp} )   \beta^n_{j_1 \cdots j_m}    }   \right)  \\
   &+ u_{j_1 j_3 \cdots j_m} \left( \overline{ \frac{\partial}{\partial z_1} (g_{j_2} e^{-\vp} )  \beta^1_{j_1 \cdots j_m}  +
     \cdots  +
     \frac{\partial}{\partial z_n} (g_{j_2} e^{-\vp} )   \beta^n_{j_1 \cdots j_m}  }    \right)    \\
   &+ \quad \cdots   \cdots  \\
   &+ u_{j_1 \cdots j_{m-1}} \left( \overline{ \frac{\partial}{\partial z_1} (g_{j_m} e^{-\vp} )  \beta^1_{j_1 \cdots j_m}  +
     \cdots  +
     \frac{\partial}{\partial z_n} (g_{j_m} e^{-\vp} )   \beta^n_{j_1 \cdots j_m}     }   \right) .
\end{align*}

\noi    Then we can rewrite this sum over $J$ as the sum over $I$ :

 $$ 2 \Real ( \db (P^* u), \beta)  = \int_\Omega \sum_{i_1 < \cdots < i_{m-1} }  u_{i_1 \cdots i_{m-1}} \overline{ T_{i_1 \cdots i_{m-1}}  }  e^{-\vp_1} dV   $$

\noi where (understanding that the index $I \cup t$ of $\beta_{I \cup t}$ denotes the rearrangement in the right order as far as $\abs{I \cup t} = m$) we define

\begin{align}
     \overline{ T_{i_1 \cdots i_{m-1}}  } :=  \sum_{t \notin I, 1 \le t \le p}  \left( \overline{ \frac{\partial}{\partial z_1} (g_{t} e^{-\vp} )  \beta^1_{I \cup t}  +
     \cdots  +
         \frac{\partial}{\partial z_n} (g_{t} e^{-\vp} )   \beta^n_{I \cup t}  }    \right)
\end{align}

\noi    Remembering that  $ \abs{u}^2 :=   \sum_{i_1 < \cdots < i_{m-1} } \abs{ u_{i_1 \cdots i_{m-1}}  }^2    $, we have

\begin{multline}\label{T1}
 \qa \qa  2 \Real ( \db (P^* u), \beta)  \ge  -\frac{1}{B} \int_\om \abs{g}^2 \abs{u}^2 e^{-2\vp - \vp_1} dV     \\   -B \int_\om \abs{g}^{-2} \sum_{i_1 < \cdots < i_{m-1} } \abs{ e^{\vp} T_{i_1 \cdots i_{m-1}}}^2  e^{-\vp_1} dV   \qa \qa \qa \qa
\end{multline}

\noi where we used the fact that for any complex numbers $X,Y$: $\abs{X}^2 + \abs{Y}^2 + 2 \Real (XY) \ge 0$    and also $\frac{1}{B} \abs{X}^2 + B \abs{Y}^2 + 2 \Real (XY) \ge 0$ for any $B \ge 1$. Then we consider

\begin{align*}
 &\abs{ e^{\vp} T_{i_1 \cdots i_{m-1}}}^2   \\
&= \abs{ e^{\vp}  \sum_{t \notin I, 1 \le t \le p}  \left( \overline{ \frac{\partial}{\partial z_1} (g_{t} e^{-\vp} )  \beta^1_{I \cup t}  +  \cdots  +   \frac{\partial}{\partial z_n} (g_{t} e^{-\vp} )   \beta^n_{I \cup t}  }    \right)    }^2   \\
&= \abs{   \sum_{t \notin I, 1 \le t \le p}  \left(  e^{\vp} \frac{\partial}{\partial z_1} (g_{t} e^{-\vp} )  \beta^1_{I \cup t}  + \cdots  +   e^{\vp}  \frac{\partial}{\partial z_n} (g_{t} e^{-\vp} )   \beta^n_{I \cup t}      \right)        }^2     \\  \label{iran}
 &= \abs{g}^{-4}    \abs{  \sum_{t \notin I, 1 \le t \le p}    \sum^n_{k=1}    \sum^p_{s=1} \overline{g_s} \left( g_s  \frac{\partial g_t }{\partial z_k} - g_t \frac{\partial g_s}{\partial z_k} \right)  \beta^k_{I \cup t}     }^2
\end{align*}

\noi noting that  (for each $t$)
\begin{align}
    e^{\vp}     \frac{\partial}{\partial z_k} (g_{t} e^{-\vp} ) = \abs{g}^{-2} \sum^p_{s=1} \overline{g_s} \left( g_s  \frac{\partial g_t }{\partial z_k} - g_t \frac{\partial g_s}{\partial z_k} \right)  .
\end{align}

\noi Finally we have the following inequalities:

{\allowdisplaybreaks

\begin{align}
 \notag  &    B  \abs{g}^{-2} \sum_{i_1 < \cdots < i_{m-1} } \abs{ e^{\vp} T_{i_1 \cdots i_{m-1}}}^2  \\
\notag   &= B  \abs{g}^{-2} \sum_{i_1 < \cdots < i_{m-1} }    \abs{g}^{-4}  \abs{  \sum_{t \notin I, 1 \le t \le p}    \sum^n_{k=1}    \sum^p_{s=1} \overline{g_s} \left( g_s  \frac{\partial g_t }{\partial z_k} - g_t \frac{\partial g_s}{\partial z_k} \right)  \beta^k_{I \cup t}     }^2    \\
\label{a1} &= B  \abs{g}^{-6} \sum_{i_1 < \cdots < i_{m-1} }  \abs{\sum^p_{s=1}  \overline{g_s}  \sum_{\substack{t \notin I \\ 1 \le t \le p}}    \sum^n_{k=1}     \left( g_s  \frac{\partial g_t }{\partial z_k} - g_t \frac{\partial g_s}{\partial z_k} \right)  \beta^k_{I \cup t}     }^2   \\
\label{a2} &\le      B  \abs{g}^{-6} \sum_{i_1 < \cdots < i_{m-1} }  \abs{g}^2  \abs{ \sum_{s=1}^p  \sum_{\substack{t \notin I \\ 1 \le t \le p}}  [[s,t]]_{I \cup t}   }^2     \\
\label{a3} &\le  (p-(m-1))   B  \abs{g}^{-4}   \sum_{i_1 < \cdots < i_{m-1} } \sum_{\substack{t \notin I \\ 1 \le t \le p}}  \abs{ \sum_{s=1}^p    [[s,t]]_{I \cup t}   }^2     \\
\label{a4}  &\le m  (p-(m-1))    B  \abs{g}^{-4} \sum_J   \sum_{1 \le q < r \le p} \abs{\;  [[q,r]]_J \;   }^2     \\
\label{a5}  &\le ( m(p-(m-1)) + 1) \sum_J       (\iddb \log \abs{g}^2) (\beta_J, \beta_J)
\end{align}
}

\noi Here we defined (from \eqref{a2} on)   $$  \left[[r,s] \right]_{J}  :=  \sum^n_{k=1}     \left( g_r  \frac{\partial g_s }{\partial z_k} - g_s \frac{\partial g_r}{\partial z_k} \right)  \beta^k_{J}  $$ for any $1 \le r, s \le p$ and the index $J$ with $\abs{J} = m$. The implications $\eqref{a1} \to \eqref{a2}$ and $\eqref{a2} \to \eqref{a3}$
are given by Cauchy-Schwarz while $\eqref{a3} \to \eqref{a4}$ follows from an elementary counting argument. Also we used that

\begin{align*}
(\iddb \log \abs{g}^2) (\beta_J, \beta_J)  &=   \sum_{1 \le p,q \le n} \frac{\partial^2}{\partial z_p \partial \overline{z}_q } ( \log \abs{g}^2 ) \beta^p_J \overline{\beta^q_J}     \\
&= \abs{g}^{-4} \sum_{1 \le q < r \le p} \abs{  \sum^n_{k=1} \left(g_q \frac{\partial g_r}{\partial z_k} - g_r \frac{\partial g_q}{\partial z_k} \right)    \beta^k_J }^2 .
\end{align*}

\noi This completes the proof of (\ref{main}).

\begin{remark}

The value of $q$ in Theorem~\ref{main} can be improved if we have  a better inequality between \eqref{a1} and $ \sum_J       (\iddb \log \abs{g}^2) (\beta_J, \beta_J)$ in  \eqref{a5}. For example, suppose that $p=2$: (\ref{main}) says that $q = \lfloor \frac{1}{4} p^2 + \frac{1}{2} p + \frac{5}{4} \rfloor = 3$ works. However, easy computation shows that \eqref{a1} is less than $2$ times $ \sum_J       (\iddb \log \abs{g}^2) (\beta_J, \beta_J)$ in this case. Therefore (the optimal value) $q=2$ also works in this case in (\ref{main}).

\end{remark}

\begin{remark}\label{eff}

 We point out that the above method of proof yields the following proposition, which is the equality~\eqref{goal} together with $L^2$ estimates. This generalizes the original Skoda division theorem in the setting of a Skoda complex. Indeed, it is completely standard in H\"ormander's $L^2$ estimates that the use of functional analysis lemma similar to (\ref{FA}) automatically produces a solution of $\db$ together with $L^2$ estimates. The only difference of the following proposition from \eqref{goal} is that we now use the last sentence of (\ref{FA}). 
 
 We use the setting and notation around \eqref{dcom} of Section 4 in the following: 
 
\begin{proposition}\label{effprop}
 Let $X = \Omega$ be a Stein manifold. Let $p$ and $q$ be integers as in (\ref{main}).   For every $m$ such that $1 \le m \le p$, let $u$ be an element of the direct sum space $\mathcal{H}^{m-1}_0$ such that each component of $u$ is holomorphic. If the norm of $u$ in $\mathcal{H}^{m-1}_0$  is finite, then there exists $v \in \mathcal{H}^{m}_0$  such that $P v = u$ and each component of $v$ is holomorphic. Moreover, there exists a constant $C > 0$ such that $\norm{v} \le C \norm{u}$ where $C$ is independent of $u$. 

\end{proposition}

It is also not hard to formulate a version for $X$ a projective manifold, similarly to the Skoda division theorem for such $X$.

\end{remark}

\section{Applications to the local syzygy}

As we discussed in the introduction, \cite{LL} used the exactness of a Skoda complex to show that not every integrally closed ideal is a multiplier ideal. Recall the setting from the introduction: let  $\Omega \subset X$ be a connected open subset of a complex manifold $X$. Let $e^{-\vp}$ be a singular weight on $\Omega$ where $\vp$ is a plurisubharmonic function on $\Omega$. From $e^{-\vp}$, there are two fundamental ways to define an ideal sheaf of local holomorphic function germs $u$  : collecting those with $\abs{u}^2 e^{-\vp}$ bounded above locally, on one hand and collecting those with the integral $\int_\Omega \abs{u}^2 e^{-\vp}$ finite, on the other hand. Let us denote the former by $\II(\vp)$ and the latter by $\JJ(\vp)$.    If $\vp$ has analytic singularities and of the form $\vp = \log ( \abs{f_1}^2 + \cdots + \abs{f_p}^2)$, then $\II(\vp)$ is the integral closure of the ideal generated by $f_1, \cdots, f_p$. It is important to distinguish $\II(\vp)$ and $\JJ(\vp)$ clearly, so let us call $\II(\vp)$ as the \textbf{sublevel} ideal sheaf of $\vp$ (while $\JJ(\vp)$ is well known as the \emph{multiplier} ideal sheaf of $\vp$). The following is essentially contained in \cite{D}.

\begin{proposition}

 A sublevel ideal sheaf $\II(\vp)$ is integrally closed.

\end{proposition}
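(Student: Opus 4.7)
The plan is to verify the defining bound of $\II(\vp)$ directly on stalks. Suppose $u$ is a holomorphic germ integral over $\II(\vp)$, so there is a monic relation
\[
u^N + a_1 u^{N-1} + \cdots + a_N = 0, \qquad a_i \in \II(\vp)^i.
\]
I would expand each $a_i$ as a finite sum of $i$-fold products of elements of $\II(\vp)$. By the very definition of the sublevel ideal, each $v \in \II(\vp)$ satisfies $\abs{v(z)} \le C_v\, e^{\vp(z)/2}$ on some neighborhood of the base point, so multiplying $i$ such bounds and summing yields constants $C_i > 0$ and a common neighborhood $U$ on which $\abs{a_i(z)} \le C_i\, e^{i\vp(z)/2}$ for every $i$.

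Substituting these estimates into the pointwise inequality $\abs{u}^N \le \sum_{i=1}^N \abs{a_i}\,\abs{u}^{N-i}$ and setting $t(z) := \abs{u(z)}\, e^{-\vp(z)/2}$ at points of $U$ where $\vp > -\infty$, I would obtain the single-variable polynomial inequality
\[
t(z)^N \le \sum_{i=1}^N C_i \, t(z)^{N-i}.
\]
Since $t^N - \sum_i C_i\, t^{N-i}$ is positive for $t$ sufficiently large, there is an explicit constant $M = M(C_1, \ldots, C_N)$ with $t(z) \le M$ throughout the set $\{\vp > -\infty\} \cap U$. Equivalently, $\abs{u(z)}^2 \evp \le M^2$ on this dense open subset of $U$.

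The only delicate point is extending the bound across the pluripolar locus $\{\vp = -\infty\}$, and this is where I expect the main (still minor) obstacle to lie. Here I would use upper semi-continuity of $\vp$: for any $z_0$ with $\vp(z_0) = -\infty$ and any approach $z_n \to z_0$ with $\vp(z_n) > -\infty$, USC forces $\vp(z_n) \to -\infty$, whence $\abs{u(z_n)} \le M\, e^{\vp(z_n)/2} \to 0$, and continuity of $u$ gives $u(z_0) = 0$. Adopting the standard convention that $\abs{u}^2 \evp = 0$ at such points, the bound $\abs{u}^2\evp \le M^2$ extends throughout $U$, showing $u \in \II(\vp)$. The substantive content is the elementary polynomial boundedness at the heart of the argument; everything else is bookkeeping.
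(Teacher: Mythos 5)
Your proposal is correct and follows essentially the same route as the paper: both reduce the integral dependence relation to the elementary fact that a root of a monic polynomial is bounded in terms of its coefficients, together with the bound $\abs{a_i} \le C_i e^{i\vp/2}$ coming from $a_i \in \II(\vp)^i$. The only cosmetic differences are that you prove the root bound inline (via $t^N \le \sum_i C_i t^{N-i}$) where the paper quotes Demailly's lemma $\abs{f} \le 2\max_i \abs{a_i}^{1/i}$, and that you spell out the (harmless) convention on the pluripolar set $\{\vp = -\infty\}$, which the paper leaves implicit.
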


\begin{proof}
 Suppose that a local holomorphic function $f$ satisfies an equation

 $$ f^k + a_{1} f^{k-1} + \cdots + a_{k-1} s + f_k = 0 $$

\noi where $a_i \in \II(\vp)^i$. We have the following elementary bound~\cite[Ch.II Lemma 4.10]{D97} for the roots of a monic polynomial

 $$ |f| \le 2 \max_{1 \le i \le k} \abs{a_i}^{\frac{1}{i}} $$

\noi locally. Therefore $\abs{f}^2 e^{-\vp} $ is locally bounded above.

\end{proof}

Now we apply our main theorem in the local setting as in \cite{LL}. We only need a slight modification of the statements from \cite{LL} due to the fact that our $q$ in (\ref{main}) is not optimal as in the algebraic case of (\ref{main}).  Let $X$ be a smooth complex algebraic variety of dimension $n$ and let $(\OO, \mathfrak{m})$ be the local ring of a point $x \in X$. Let $h_1, \cdots, h_p \in \mathfrak{m}$ be any collection of non-zero elements generating an ideal $\mathfrak{a} \subset \OO$. Our main theorem (\ref{main}) implies the following version of Theorem B in \cite{LL} for which we just note that our exact Skoda complex sits inbetween the two Koszul complexes in the statement.

\begin{theorem}[see \textbf{Theorem B}~\cite{LL}]
 Let $\JJ(\psi)$ be the multiplier ideal sheaf of a plurisubharmonic function $\psi$ which is defined in a neighborhood of $x$. There exists an integer $p' \ge p$ such that for every $0 \le r \le p$, the natural map

 $$ H_r \left( K_\bullet(h_1, \cdots, h_p) \otimes \mathfrak{a}^{p' -r} \JJ(\psi) \right)  \rightarrow H_r \left( K_\bullet (h_1, \cdots, h_p) \otimes \JJ (\psi) \right)   $$ vanishes.

\end{theorem}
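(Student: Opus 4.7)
The plan is to use the $p'$-th Skoda complex, with $p' := q = \lfloor \tfrac{1}{4}p^2 + \tfrac{1}{2}p + \tfrac{5}{4}\rfloor$ produced by Theorem~\ref{main}, as an interpolating object between the two Koszul complexes in the statement. Set $\varphi := \log(|h_1|^2 + \cdots + |h_p|^2)$, so $\mathfrak{a}$ is the ideal generated by $h_1,\ldots,h_p$. After possibly shrinking $X$ around $x$ to a Stein neighborhood (harmless since the assertion is local), the Skoda complex $(\Skod_{p'})$ associated to $(h_1,\ldots,h_p)$ with weight $\psi$ and trivial line bundles is exact; its term in position $m$ is $\Lambda^m V \otimes \mathcal{J}((p'-m)\varphi + \psi)$, and its differential is the same Koszul contraction $i(h)$ as the one in the two Koszul complexes of the statement.

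The first step is to record two elementary ideal inclusions. For every integer $k \geq 0$,
\[
    \mathfrak{a}^k\,\mathcal{J}(\psi) \;\subseteq\; \mathcal{J}(k\varphi + \psi) \;\subseteq\; \mathcal{J}(\psi).
\]
The left inclusion uses the pointwise bound $|f|^2 \leq C\,e^{k\varphi}$ for $f \in \mathfrak{a}^k$, so that integrability of $|v|^2 e^{-\psi}$ for $v \in \mathcal{J}(\psi)$ gives integrability of $|fv|^2 e^{-k\varphi - \psi}$. The right inclusion uses the fact that $\varphi$ is bounded above near $x$ (indeed $h_i(x) = 0$ makes $|h|$ small), so $e^{-k\varphi-\psi}$ dominates a positive multiple of $e^{-\psi}$ locally, and integrability against the former implies integrability against the latter.

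For the main chase, let $\eta$ represent a class in $H_r\!\left(K_\bullet(h_1,\ldots,h_p) \otimes \mathfrak{a}^{p'-r}\mathcal{J}(\psi)\right)$; thus $\eta \in (\mathfrak{a}^{p'-r}\mathcal{J}(\psi))^{\binom{p}{r}}$ and $i(h)\eta = 0$. By the left inclusion of the previous paragraph applied with $k = p'-r$, I regard $\eta$ as a section of position $r$ of the Skoda complex; the identity $i(h)\eta = 0$, being an equation in $\mathcal{O}^{\binom{p}{r-1}}$, remains valid in the enlarged ambient sheaf, so $\eta$ is also a cycle for the Skoda differential. Exactness of the Skoda complex (Theorem~\ref{main}) then provides $\beta \in (\mathcal{J}((p'-r-1)\varphi + \psi))^{\binom{p}{r+1}}$ with $i(h)\beta = \eta$. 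By the right inclusion applied with $k = p' - r - 1$, I have $\beta \in \mathcal{J}(\psi)^{\binom{p}{r+1}}$, so the same equation $\eta = i(h)\beta$ exhibits $\eta$ as a boundary in $K_\bullet(h_1,\ldots,h_p) \otimes \mathcal{J}(\psi)$. Hence the class of $\eta$ vanishes there.

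The two boundary cases are immediate: for $r = 0$ the required exactness is the surjectivity of the last Skoda map $\Lambda^1 V \otimes \mathcal{J}((p'-1)\varphi + \psi) \to \mathcal{J}(p'\varphi + \psi)$, so the argument above still applies; for $r = p$ any cycle $\eta \in \mathfrak{a}^{p'-p}\mathcal{J}(\psi)$ satisfies $h_i\eta = 0$ for each $i$, which forces $\eta = 0$ in the integral domain $\mathcal{O}$, so the source homology already vanishes. I do not expect any serious obstacle: the whole argument is a short three-term diagram chase, with the heavy lifting performed by Theorem~\ref{main}. The only care required is the verification of the two ideal inclusions and the observation that the Koszul differential on each of the three complexes is the same contraction $i(h)$, so that cycles remain cycles under the inclusions used.
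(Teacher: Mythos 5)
Your argument is correct and is exactly the paper's route: the paper proves this by noting that the exact $p'$-th Skoda complex (with $p'=q$ from Theorem~\ref{main}) sits between the two Koszul complexes via the inclusions $\mathfrak{a}^{p'-r}\JJ(\psi)\subseteq\JJ((p'-r)\varphi+\psi)\subseteq\JJ(\psi)$, and your diagram chase (including the $r=0$ and $r=p$ cases) just spells out that one-line observation.
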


\noindent   The original Theorem B was stronger when $\vp$ is algebraic, saying that we could actually take $p' = p$ in that case.  Now using the isomorphism between $H_r$ and $\Tor_r$ and taking $p=n$, we have

\begin{corollary}[see \textbf{Corollary C}~\cite{LL}]\label{newC}
 There exists an integer $n' \ge n$ such that the natural maps

 $$ \Tor_r ( \mathfrak{m}^{n' -r} \JJ(\psi) , \CC) \rightarrow \Tor_r (\JJ(\psi), \CC)   $$  vanish for all $0 \le r \le n$.

\end{corollary}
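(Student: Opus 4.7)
The plan is to deduce Corollary~\ref{newC} directly from the preceding Theorem~B by translating Koszul homology into $\Tor$; there is no new analytic content required, since all of the hard work is in Theorem~\ref{main} and its consequence Theorem~B.

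First, I would specialize the choice of generators. Since $X$ is smooth of dimension $n$ at $x$, the maximal ideal $\mathfrak{m}$ of the local ring $\OO = \OO_{X,x}$ is generated by a regular system of parameters $h_1, \ldots, h_n$. I apply Theorem~B with $p = n$ to this tuple, so that the ideal $\mathfrak{a}$ generated by $h_1, \ldots, h_n$ equals $\mathfrak{m}$, and I set $n' := p' \ge n$ to be the integer supplied by Theorem~B.

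Second, I would invoke the standard homological fact that in a regular local ring of dimension $n$, a regular system of parameters is an $\OO$-regular sequence. Consequently the Koszul complex $K_\bullet(h_1, \ldots, h_n)$ is a finite free resolution of the residue field $\OO/\mathfrak{m} \cong \CC$, and tensoring this resolution with any $\OO$-module $M$ computes $\Tor$, yielding a canonical isomorphism
$$ H_r \bigl( K_\bullet(h_1, \ldots, h_n) \otimes M \bigr) \;\cong\; \Tor_r^\OO(M, \CC) $$
for every $r \ge 0$. Applying this with $M = \mathfrak{m}^{n'-r} \JJ(\psi)$ and with $M = \JJ(\psi)$, and using the naturality of the Koszul-to-$\Tor$ comparison under the inclusion $\mathfrak{m}^{n'-r}\JJ(\psi) \hookrightarrow \JJ(\psi)$, the vanishing statement in Theorem~B translates termwise into the desired vanishing
$$ \Tor_r(\mathfrak{m}^{n'-r}\JJ(\psi), \CC) \;\longrightarrow\; \Tor_r(\JJ(\psi), \CC) = 0 $$
for every $0 \le r \le n$.

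The main point to verify carefully, rather than a genuine obstacle, is the naturality in the third step: one must check that the quasi-isomorphism between the Koszul resolution and any other free resolution of $\CC$ is compatible with the module inclusion $\mathfrak{m}^{n'-r}\JJ(\psi) \hookrightarrow \JJ(\psi)$, so that the induced maps on homology are identified with the induced maps on $\Tor$. This is a standard property of the Koszul resolution and requires no extra input.
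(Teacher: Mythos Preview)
Your proposal is correct and follows exactly the route indicated in the paper: the paper simply remarks ``using the isomorphism between $H_r$ and $\Tor_r$ and taking $p=n$'' before stating the corollary, and you have spelled out precisely this passage (choosing a regular system of parameters so that $\mathfrak a=\mathfrak m$ and the Koszul complex resolves $\CC$, then identifying the Koszul homology maps with the $\Tor$ maps).
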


\begin{corollary}[see \textbf{Theorem A}~\cite{LL}]\label{newA}
 Let $\JJ = \JJ(\psi) \subset \OO$ be (the germ at $x$ of) any multiplier ideal. Then there exists an integer $n' \ge n$ with the property:  For $p \ge 1$, no minimal $p${th} syzygy of $\JJ$ vanishes modulo $\mathfrak{m}^{n' + 1 - p}$.

\end{corollary}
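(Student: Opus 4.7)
The plan is to derive (\ref{newA}) from (\ref{newC}) following the purely commutative-algebraic argument that Lazarsfeld and Lee use to pass from their Corollary C to their Theorem A. That argument depends only on the shape of the Tor vanishing
$$\Tor_r\bigl(\mathfrak{m}^{c-r}\JJ,\CC\bigr) \;\longrightarrow\; \Tor_r(\JJ,\CC) \;=\; 0,$$
and not on the specific value of the constant $c$. Since (\ref{newC}) has exactly this shape with $c = n'$ in place of $c = n$, the same formal derivation yields the corresponding syzygy statement with $n'$ replacing $n$, which is precisely (\ref{newA}).

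To describe the mechanism briefly, let $(\OO,\mathfrak{m})$ be the local ring at $x$ and fix a minimal free resolution $G_\bullet \to \JJ$ over $\OO$. Standard homological algebra identifies $\Tor_p(\JJ,\CC) \cong G_p \otimes_{\OO} \CC$, and each minimal generator $\sigma \in G_p$ produces a minimal $p$th syzygy $s := d_p(\sigma) \in G_{p-1}$. Suppose for contradiction that some such $s$ lies in $\mathfrak{m}^{n'+1-p}G_{p-1}$. Through comparison of $G_\bullet$ with the Koszul resolution $K_\bullet(h_1,\dots,h_n)$ of $\CC$ (formalized via the double complex $G_\bullet \otimes K_\bullet$, both of whose spectral sequences converge to $\Tor_\bullet(\JJ,\CC)$), the nonzero class of $\sigma$ in $G_p/\mathfrak{m}G_p$ lifts to a nonzero element lying in the image of
$$\Tor_{p-1}\bigl(\mathfrak{m}^{n'+1-p}\JJ,\CC\bigr) \;\longrightarrow\; \Tor_{p-1}(\JJ,\CC),$$
contradicting (\ref{newC}) applied with $r = p-1$, for which $n'-r = n'+1-p$.

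The only step that demands genuine bookkeeping is the transfer of the syzygy $s$ into a Tor class over $\mathfrak{m}^{n'+1-p}\JJ$. This is the combinatorial heart of the \cite{LL} argument and amounts to a routine manipulation with the double complex above, in which one uses that the differentials of the minimal resolution $G_\bullet$ have entries in $\mathfrak{m}$ to propagate the hypothesis $s \in \mathfrak{m}^{n'+1-p}G_{p-1}$ into increasingly deep powers of $\mathfrak{m}$ on successive columns of the double complex. Because this mechanism is completely formal in the exponent, it carries over to our setting without modification beyond replacing $n$ by $n'$, and I expect no new analytic input is required.
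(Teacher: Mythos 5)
Your overall plan --- run the formal Lazarsfeld--Lee reduction from the Tor-vanishing statement to the syzygy statement, noting that it is insensitive to the particular constant so that $n'$ from (\ref{newC}) simply replaces $n$ --- is exactly the route the paper takes. The paper implements it by quoting Proposition 1.1 of \cite{LL}: if $e \in R_p$ is a minimal generator (so $e \notin \mathfrak{m}R_p$) and $u_p(e) \in \mathfrak{m}^{a}R_{p-1}$ with $a = n'+1-p \ge 2$, then the class of $e$ is a \emph{nonzero} element of the image of $\Tor_p(\mathfrak{m}^{a-1}\JJ,\CC) \to \Tor_p(\JJ,\CC)$; since $a-1 = n'-p$, this contradicts (\ref{newC}) applied with $r = p$.

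However, the one step you yourself single out as the combinatorial heart is mis-indexed, and as you state it it is false, so the contradiction does not arise where you place it. You put the nonzero class in the image of $\Tor_{p-1}(\mathfrak{m}^{n'+1-p}\JJ,\CC) \to \Tor_{p-1}(\JJ,\CC)$, i.e.\ you keep the full power $n'+1-p$ but drop the homological degree to $p-1$. The nonvanishing in this situation lives in degree $p$: it is the class of the minimal generator $\sigma \in G_p$, nonzero in $\Tor_p(\JJ,\CC)\cong G_p\otimes\CC$; the syzygy $s=d_p(\sigma)$ itself maps to zero in $\Tor_{p-1}(\JJ,\CC)\cong G_{p-1}\otimes\CC$ because $s\in\mathfrak{m}^{n'+1-p}G_{p-1}\subset\mathfrak{m}G_{p-1}$, so there is nothing nonzero to transfer in degree $p-1$. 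Concretely, take $p=1$: your claim would produce a nonzero element in the image of $\Tor_0(\mathfrak{m}^{n'}\JJ,\CC)\to\Tor_0(\JJ,\CC)=\JJ/\mathfrak{m}\JJ$, but that image is zero for \emph{every} ideal since $\mathfrak{m}^{n'}\JJ\subset\mathfrak{m}\JJ$, while ideals whose minimal first syzygies vanish modulo $\mathfrak{m}^{n'}$ certainly exist (\cite[Example 2.2]{LL}); since your transfer step is supposed to be a purely commutative-algebraic fact about an arbitrary ideal (the multiplier-ideal input enters only through (\ref{newC})), it cannot be correct as stated. The repair is exactly the paper's (and \cite{LL}'s) bookkeeping: keep homological degree $p$ and lower the power of $\mathfrak{m}$ by one, obtaining a nonzero class in $\Image\bigl(\Tor_p(\mathfrak{m}^{n'-p}\JJ,\CC)\to\Tor_p(\JJ,\CC)\bigr)$, and then invoke (\ref{newC}) with $r=p$, for which $n'-r=n'-p$.
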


\begin{proof}[(\ref{newC}) implies (\ref{newA})]

 Take $n'$ from (\ref{newC}). Suppose that a minimal $p$th syzygy of $\JJ$ vanishes modulo $\mathfrak{m}^{n' + 1 -p}$.  That is, given a minimal free resolution, a linear combination of the columns of $u_p$, namely $u_p (e)$ for some $e \in R_p = \OO^{b_p} $  satisfies  $u_p (e) \in \mathfrak{m}^a R_{p-1} = \mathfrak{m}^a \cdot \OO^{b_{p-1}} $ where $a = n' + 1 - p \ge 2$. Then Proposition 1.1~\cite{LL} says that $e$ represents a class lying in the image of $\Tor_p (\mathfrak{m}^{a-1} \II, \CC) \to \Tor_p (\II, \CC) $. This contradicts to (\ref{newC}).

\end{proof}

\noi Finally,  \cite[Example 2.2]{LL}  says that there exists an integrally closed ideal $\II$ supported at a point with a first syzygy vanishing to arbitrary order $a$ at the origin. It cannot be a multiplier ideal due to (\ref{newA}). Hence Corollary~\ref{mainc} is proved since it is sufficient to examine at the local ring of a point.
\\

\noindent \textit{Note added.}  The referee kindly informed the author of recently published papers \cite{J1} and \cite{J2} (formerly arXiv:1102.3950,  arXiv:1105.4474) which contain independently obtained results similar to ours in this paper (formerly arXiv:1007.0551) : division in the Koszul complex setting (\ref{main}), (\ref{effprop}). We note that \cite[Corollary 5.7]{J1} gives an improved lower bound of $q$ from (\ref{main}) and (\ref{effprop}). Its method seems useful toward obtaining the optimal value of $q$. The author is grateful to the referee for informing him of  \cite{J1} and \cite{J2}.

\footnotesize

\bibliographystyle{amsplain}

\begin{thebibliography}{widest-}

\baselineskip0.7mm



\bibitem[BFJ]{BFJ} S. Boucksom, C.Favre and M. Jonsson, \textit{Valuations and plurisubharmonic singularities}, Publ. Res. Inst. Math. Sci \textbf{44} (2008), no.2, 449-494.


\bibitem[D97]{D97} J.-P. Demailly, \textit{Complex analytic and differential geometry}, book available at http://www-fourier.ujf-grenoble.fr/~demailly/manuscripts/agbook.pdf.



\bibitem[D]{D} J.-P. Demailly, \textit{Multiplier ideal sheaves and analytic methods in algebraic geometry},  School on Vanishing Theorems and Effective Results in Algebraic Geometry (Trieste, 2000),  1-148, ICTP Lect. Notes \textbf{6}, Abdus Salam Int. Cent. Theoret. Phys., Trieste, 2001.


\bibitem[DEL]{DEL} J.-P. Demailly, L.Ein and R.Lazarsfeld, \emph{A subadditivity property of multiplier ideals},  Michigan Math. J.  \textbf{48}  (2000), 137 -156.


\bibitem[DK]{DK} J.-P. Demailly and J.Koll\'ar, \textit{Semi-continuity of complex singularity exponents and K\"ahler-Einstein metrics on Fano orbifolds}, Ann. Sci. \'Ecole Norm. Sup. (4) 34 (2001), no. 4, 525-556.


\bibitem[EL]{EL} L. Ein and R. Lazarsfeld, \emph{A geometric effective Nullstellensatz}, Invent. Math. \textbf{137} (1999), 427-448.


\bibitem[EP]{EP} L. Ein and M. Popa, \emph{Global division of cohomology classes via injectivity}, Michigan Math. J.  \textbf{57}  (2008), 249-259.

\bibitem[FJ]{FJ} C. Favre and M. Jonsson, \emph{Valuations and multiplier ideals}, J. Amer. Math. Soc. \textbf{18} (2005), 655-684.



\bibitem[H65]{H65} L. H\"ormander, \textit{$L^2$ estimates and existence theorems for the $\bar \partial $ operator}, Acta Math. \textbf{113} (1965), 89-152.

\bibitem[H67]{H67} L. H\"ormander, \textit{Generators for some rings of analytic functions}, Bull. Amer. Math. Soc. \textbf{73} (1967), 943 - 949.


\bibitem[J1]{J1} Q. Ji, \textit{Division theorems for exact sequences}, Math. Ann., in press. 

\bibitem[J2]{J2} Q. Ji, \textit{Division theorems for the Koszul complex}, Adv. Math. \textbf{231} (2012), no. 5, 2450 - 2464. 






\bibitem[L]{L} R. Lazarsfeld, \textit{Positivity in Algebraic Geometry},  Ergebnisse der Mathematik und ihrer Grenzgebiete. 3. Folge. 48 - 49. Springer-Verlag, Berlin, 2004.


\bibitem[LL]{LL} R. Lazarsfeld and K. Lee, \textit{Local syzygies of multiplier ideals}, Invent. Math. \textbf{167} (2007), no. 2, 409-418.


\bibitem[Sk72]{Sk72} H. Skoda, \textit{Application des techniques $L\sp{2}$ \'a la th\'eorie des id\'eaux d'une alg\`ebre de fonctions holomorphes avec poids}, Ann. Sci. \'Ecole Norm. Sup.(4) \textbf{5} (1972), 545-579.



\bibitem[Va]{Va} D. Varolin, \textit{Division theorems and twisted complexes},  Math. Z. \textbf{259} (2008),  no. 1, 1-20.



\bibitem[Vo]{V} C. Voisin, \emph{Hodge theory and complex algebraic geometry}, Cambridge Studies in Advanced Mathematics, 76. Cambridge University Press, Cambridge, 2007.


\end{thebibliography}

\qa

\qa

\normalsize

\noi \textsc{Dano Kim}

\noi Department of Mathematical Sciences, Seoul National University

\noi 1 Kwanak-ro, Kwanak-gu, Seoul, Korea 151-747

\noi Email address: kimdano@snu.ac.kr

\noi

\end{document}